\theoremstyle{plain}
   \newtheorem{theorem}{Theorem}[section]
   \newtheorem{lemma}[theorem]{Lemma}
   \newtheorem{corollary}[theorem]{Corollary}
\theoremstyle{definition}
   \newtheorem{definition}{Definition}[section]
\theoremstyle{remark}
   \newtheorem{remark}[theorem]{Remark}
\numberwithin{equation}{section}
\newcommand{\lma}{\lambda_{\rm max}}
\newcommand{\lmi}{\lambda_{\rm min}}
\newcommand{\zz}{\mathbf{z}}
\newcommand{\ww}{\mathbf{w}}
\newcommand{\ee}{\mathbf{e}}
\newcommand{\xx}{\mathbf{x}}
\newcommand{\yy}{\mathbf{y}}
\newcommand{\uu}{\mathbf{u}}
\renewcommand{\ss}{\mathbf{s}}
\newcommand{\vv}{\mathbf{v}}
\newcommand{\one}{\mathbf{1}}
\newcommand{\VV}{\mathbf{V}}
\newcommand{\EE}{\mathbb{E}}
\newcommand{\PP}{\mathbb{P}}
\newcommand{\FF}{\mathcal{F}}
\newcommand{\RR}{\mathbb{R}}
\newcommand{\XR}{\mathsf{X}}
\def\newop#1{\expandafter\def\csname #1\endcsname{\mathop{\rm
#1}\nolimits}}
\title[Hyperbolic Kadison-Singer]{Hyperbolic polynomials and the \\ Kadison-Singer problem } 
\author{Petter Br\"and\'en}
\thanks{The author is a Wallenberg Academy Fellow
  supported by a grant from the Knut and Alice Wallenberg
  Foundation. The research is also supported by Vetenskapsr\aa det.}
\address{Department of Mathematics, KTH, Royal Institute of Technology, SE-100 44 Stockholm,
Sweden}
\email{pbranden@kth.se}
\subjclass[2010]{46L05, 30C15, 26C10, 15A18, 42C15}
\begin{document}
\begin{abstract}
Recently Marcus,  Spielman and  Srivastava gave a spectacular proof of a theorem which implies a positive solution to the Kadison-Singer problem via Weaver's $KS_r$ conjecture. 
 We extend this theorem to the realm 
 of hyperbolic polynomials and hyperbolicity cones, as well as to arbitrary ranks.  We also sharpen the theorem by providing better bounds, which imply better bounds in Weaver's $KS_r$ conjecture for each $r>2$. For $r=2$ our bound agrees with Bownik \emph{et al.} \cite{R2}. 
 
 
\end{abstract}
\maketitle

\tableofcontents

\thispagestyle{empty}

\bigskip \bigskip \bigskip \bigskip \bigskip \bigskip \bigskip \bigskip \bigskip 

\small{ \noindent 
Parts of this work are based on unpublished notes \cite{BLec0,BLec} from a graduate course focused on hyperbolic polynomials and the papers \cite{MSS1,MSS2} of Marcus,  Spielman and  Srivastava, given by the author at the Royal Institute of Technology (Stockholm) in the fall of 2013.}

\newpage
\section{Introduction and main result}
The following theorem of Marcus, Spielman and Srivastava is a stronger version of Weaver's $KS_r$ conjecture \cite{We}, which implies a positive solution to the Kadison-Singer problem \cite{KS}. See \cite{Cas} for a review of the many consequences of Theorem~\ref{MSSmain}.

\begin{theorem}[Marcus,  Spielman and  Srivastava \cite{MSS2}]\label{MSSmain}
Let $r \geq 2$ be an integer and $\epsilon$ a positive real number. Suppose $A_1, \ldots, A_m$ are positive semidefinite hermitian rank at most one  matrices of size $d \times d$ satisfying 
$$\sum_{i=1}^m A_i = I, $$ where $I$ is the identity matrix. If  $\tr(A_i) \leq \epsilon$ for all $1\leq i \leq m$, then  there is a partition $S_1\cup S_2 \cup \cdots \cup S_r=[m]:=\{1,2,\ldots,m\}$ such that 
\begin{equation}\label{sqbound1}
\left\| \sum_{i \in S_j} A_i  \right\| \leq \frac {(1+\sqrt{r\epsilon})^2} r,    
\end{equation}
for each $j \in [r]$, where $\|\cdot \|$ denotes the operator matrix norm, and $\tr(A)$ denotes the trace of $A$. 
\end{theorem}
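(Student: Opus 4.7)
The plan is to follow the method of interlacing polynomials introduced by Marcus, Spielman and Srivastava, which reduces the existence of a good partition to a root-location bound on a single expected characteristic polynomial.

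\textbf{Randomized partition.} Let $\xi_1,\ldots,\xi_m$ be independent random variables, each uniform on $[r]$, and set $S_j=\{i:\xi_i=j\}$ and $X_j=r\sum_{i\in S_j}A_i$. Since $\sum_iA_i=I$ we have $\EE[X_j]=I$, and \eqref{sqbound1} amounts to $\lma(X_j)\le(1+\sqrt{r\epsilon})^2$ for every $j$. It therefore suffices to exhibit one realization of $\xi$ in which the largest eigenvalues of all $r$ matrices $X_j$ obey this common bound.

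\textbf{Interlacing families.} I would organize the $r^m$ possible assignments into a depth-$m$ tree and, at each internal node, form the expected value (averaged over the leaves below) of the polynomial $\prod_{j=1}^{r}\det(xI-X_j)$, whose roots are the union of the eigenvalues of the $X_j$. The key observation is that this entire family of polynomials is generated by applying the operators $\prod_{i=1}^{m}\bigl(1-\tfrac{1}{r}\partial_{z_i}\bigr)$, which preserve real stability, to a multivariate real-stable polynomial built from $\det(xI+\sum_{i=1}^{m}z_iA_i)$, followed by specialization of coordinates. This both guarantees real-rootedness of every conditional expectation and furnishes the interlacing property between siblings, so the interlacing-family principle yields a single partition $\xi$ for which $\max_j\lma(X_j)$ is at most the largest root of
\[
p(x)=\prod_{i=1}^{m}\!\Bigl(1-\tfrac{1}{r}\partial_{z_i}\Bigr)\,\det\!\Bigl(xI+\sum_{i=1}^{m}z_iA_i\Bigr)\bigg|_{z=0}.
\]

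\textbf{Bounding the largest root of $p$.} This last step is the main obstacle. I would apply the multivariate barrier method: for a real-stable polynomial $q(x,z_1,\ldots,z_m)$ and a point above all of its real roots, define the barrier functions $\Phi_i=\partial_{z_i}q/q$ and track how they evolve under each operator $1-\tfrac{1}{r}\partial_{z_i}$. At $z=0$ one computes $\Phi_i(x_0,0)=\tr(A_i)/x_0\le\epsilon/x_0$, so the trace hypothesis controls the initial barriers uniformly. The technical heart is a sharp one-step barrier inequality showing that, for an appropriate vertical shift $\delta$, applying $1-\tfrac{1}{r}\partial_{z_i}$ and then decreasing $x$ by $\delta$ preserves both the above-roots property and a suitably updated barrier estimate. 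Iterating over the $m$ coordinates and jointly optimizing $x_0$ and $\delta$ — the optimum arising from a quadratic in $\sqrt{r\epsilon}$ — delivers $(1+\sqrt{r\epsilon})^2$ as an upper bound on the largest root of $p$, and dividing by $r$ gives \eqref{sqbound1}. The sharpened bounds and the hyperbolic generalization advertised in the abstract will enter precisely through refinements of this barrier-update inequality, carried out in the more general category of polynomials hyperbolic with respect to a fixed cone.
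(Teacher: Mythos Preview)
Your plan is the Marcus--Spielman--Srivastava strategy, and it is exactly what the paper carries out (in the broader hyperbolic setting, then specialized to $h=\det$). There is, however, one concrete slip that would derail the computation. The polynomial you are averaging is $\prod_{j=1}^{r}\det(xI-X_j)$, of degree $rd$, while your displayed $p(x)$ has degree $d$. The correct expected polynomial is
\[
p(x)\;=\;\prod_{i=1}^{m}\bigl(1-\partial_{z_i}\bigr)\Bigl[\det\bigl(xI+\textstyle\sum_i z_iA_i\bigr)\Bigr]^{r}\Big|_{z=0},
\]
that is, the operator is $(1-\partial_{z_i})$ with no factor $1/r$, acting on the $r$-th power of the determinant (equivalently, on the $rd\times rd$ block-diagonal determinant). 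This is not cosmetic: with rank-one $A_i$ your version collapses to $\det\bigl(xI-\tfrac1r\sum_iA_i\bigr)=(x-\tfrac1r)^d$, which says nothing about the partition. The correction also explains where $r\epsilon$ really enters the barrier step: for the $r$-th power the initial barrier at $(x_0,0)$ is $\Phi_i=r\,\tr(A_i)/x_0\le r\epsilon/x_0$, not $\epsilon/x_0$, and the subsequent optimization in $(x_0,\delta)$ then yields $(1+\sqrt{r\epsilon})^2$.

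With that fix your outline coincides with the paper's: the paper applies its hyperbolic Theorem~\ref{t1} to $g=\prod_{j=1}^{r}\det(X^{j})$, uses the compatible-family machinery in place of ``interlacing families'', and runs the barrier argument of Sections~\ref{scorr}--\ref{sboundmix}; the explicit constant comes from $\delta(\epsilon,\infty,\infty)=(1+\sqrt{\epsilon})^2$ in Theorem~\ref{specifics}. A minor point of phrasing: in the barrier step one does not ``decrease $x$ by $\delta$'' after each operator; rather (Corollary~\ref{corbond2}) one shifts the base point by $\delta\vv_j$ in the cone direction while setting $y_j=1$, and it is the accumulated shift in the $\ee$-direction that furnishes the final root bound.
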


One purpose of this work is to extend Theorem~\ref{MSSmain}  to hyperbolic polynomials and hyperbolicity cones. A benefit of the extension (Theorem~\ref{t1})  is that the proof becomes coherent in its general form, and fits naturally in the theory of hyperbolic polynomials.  In particular we don't need to use the Helton-Vinnikov theorem to translate between matrices and hyperbolic polynomials. In our more general setting Theorem~\ref{t1} applies to e.g. hermitian matrices over quaternions, Euclidean Jordan Algebras and Symmetric Domains, see \cite{AB}. We also get rid of the rank constraints in Theorem~\ref{MSSmain}, which was independently achieved by Michael Cohen \cite{Cohen} for complex hermitian matrices. The other main purpose of this work is to sharpen the inequalities in \eqref{sqbound1}. For $r=2$ our upper bound coincides with that of \cite{R2}, while for each $r>2$ we provide better bounds than previously known.  

The plan of the paper is as follows. In this section we provide relevant background on hyperbolic polynomials and state our main results. In Section \ref{scompatible} we consider compatible families of polynomials. This is a subclass of the class of interlacing families considered in \cite{MSS1,MSS2}. In Section \ref{smixed} we define mixed hyperbolic polynomials and construct a large class of compatible families of polynomials arising from mixed hyperbolic polynomials. In Section \ref{scorr} we derive inequalities for hyperbolic polynomials needed to prove Theorem~\ref{t1}. In Section \ref{sboundmix} we define the mixed characteristic polynomial of a tuple of vectors in the hyperbolicity cone. We use the inequalities derived in Section \ref{scorr} to find upper bounds for the largest zero of a mixed characteristic polynomial. In Section \ref{sproof} we use the results in the previous sections to prove our main theorem, Theorem~\ref{t1}. Finally, in Section \ref{sspec} we prove Theorem~\ref{specifics} which provides explicit bounds in  Theorem~\ref{t1}. \\[1ex]

Hyperbolic polynomials are multivariate generalizations of real-rooted polynomials and determinants. They have their  origin  in PDE theory where they were studied by e.g. Petrovsky, G\aa rding, Bott, Atiyah and H\"ormander, see \cite{ABG,Ga,Horm}.  During recent years  hyperbolic polynomials have been studied in diverse areas such as 
control theory, optimization, real algebraic geometry, probability theory, computer science and combinatorics, see  \cite{BBL,Pem,Ren,Vin,Wag} and the references therein. 

A homogeneous polynomial $h(\xx) \in \RR[x_1, \ldots, x_n]$ is \emph{hyperbolic} with respect to a vector 
$\ee \in \RR^n$ if $h(\ee) \neq 0$, and if for all $\xx \in \RR^n$ the univariate polynomial $t \mapsto h(t\ee -\xx)$ has only real 
zeros. Here are some examples of hyperbolic polynomials:
\begin{enumerate}
\item Let $h(\xx)= x_1x_2\cdots x_n$. Then $h(\xx)$ is hyperbolic with respect to any vector $\ee \in \RR_{++}^n=(0,\infty)^n$: 
$$
h(t\ee-\xx) = \prod_{j=1}^n (te_j-x_j).
$$
\item Let $X=(x_{ij})_{i,j=1}^n$ be a matrix of $n(n+1)/2$ variables where we impose $x_{ij}=x_{ji}$ for all $1\leq i<j\leq n$. Then $\det(X)$ is hyperbolic with respect to the identity matrix 
$I=\diag(1, \ldots, 1)$. Indeed $t \mapsto \det(tI-X)$ is the characteristic polynomial of the symmetric matrix $X$, so it has only real zeros.

More generally we may consider complex hermitian matrices $Z=(x_{jk}+iy_{jk})_{j,k=1}^n$ (where $i = \sqrt{-1}$) of $n^2$ real variables where we impose $x_{jk}=x_{kj}$ and $y_{jk}=-y_{kj}$, for all $1\leq j\leq k \leq n$.  Then $\det(Z)$ is a real polynomial which is hyperbolic with respect to $I$.
\item Let $h(\xx)=x_1^2-x_2^2-\cdots-x_n^2$. Then $h$ is hyperbolic with respect to $(1,0,\ldots,0)^T$. 
\end{enumerate}

Suppose $h$ is hyperbolic with respect to $\ee \in \RR^n$. We may write 
\begin{equation}\label{dalambdas}
h(t\ee-\xx) = h(\ee)\prod_{j=1}^d (t - \lambda_j(\xx)),
\end{equation}
where $\lma(\xx)=\lambda_1(\xx) \geq \cdots \geq \lambda_d(\xx)=\lmi(\xx)$ are called the \emph{eigenvalues} of $\xx$ (with respect to $\ee$), and $d$ is the degree of $h$. In particular 
\begin{equation}\label{prolambda}
h(\xx) = h(\ee)\lambda_1(\xx) \cdots \lambda_d(\xx). 
\end{equation}

By homogeneity 
\begin{equation}\label{dilambdas}
\lambda_j(s\xx+t\ee)= 
\begin{cases}
s\lambda_j(\xx)+t, &\mbox{ if } s\geq 0 \mbox{ and } \\
s\lambda_{d-j}(\xx)+t, &\mbox{ if } s \leq 0
\end{cases},
\end{equation}
for all $s,t \in \RR$ and $\xx \in \RR^n$.

The (open) \emph{hyperbolicity cone} of $h$ with respect to $\ee$ is the set 
$$
\Lambda_{\tiny{++}}(h,\ee)= \{ \xx \in \RR^n : \lmi(\xx) >0\}.
$$
We sometimes abbreviate and write $\Lambda_{\tiny{++}}(\ee)$ or $\Lambda_{\tiny{++}}$. 
We denote its closure by $ \Lambda_{\tiny{+}}= \Lambda_{\tiny{+}}(h,\ee)=\{ \xx \in \RR^n : \lmi(\xx) \geq 0\}$. 
Since $h(t\ee-\ee)=h(\ee)(t-1)^d$, we see that $\ee \in \Lambda_{\tiny{++}}$. The hyperbolicity cones for the examples  above are:
\begin{enumerate}
\item $\Lambda_{\tiny{++}}(\ee)= \RR_{++}^n$. 
\item $\Lambda_{\tiny{++}}(I)$ is the cone of positive definite matrices.
\item $\Lambda_{\tiny{++}}(1,0,\ldots,0)$  is the \emph{Lorentz cone} 
$$
\left\{\xx \in \RR^n : x_1 > \sqrt{x_2^2+\cdots+x_n^2}\right\}.
$$
\end{enumerate}

The following theorem collects a few fundamental facts about hyperbolic polynomials and their hyperbolicity cones. For proofs see \cite{Ga,Ren}. 

\begin{theorem}[G\aa rding, \cite{Ga}]\label{hypfund}
Suppose $h$ is hyperbolic with respect to $\ee \in \RR^n$. 
\begin{enumerate}
\item $\Lambda_+(\ee)$ and $\Lambda_{++}(\ee)$ are convex cones.
\item $\Lambda_{++}(\ee)$ is the connected component of 
$$
\{ \xx \in \RR^n : h(\xx) \neq 0\}
$$
which contains $\ee$. 
\item $\lmi : \RR^n \rightarrow \RR$ is a concave function, and  $\lma : \RR^n \rightarrow \RR$ is a convex function. 
\item If $\ee' \in \Lambda_{++}(\ee)$, then $h$ is hyperbolic with respect to $\ee'$ and $\Lambda_{++}(\ee')=\Lambda_{++}(\ee)$.
\end{enumerate}
\end{theorem}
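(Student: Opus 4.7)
The plan is to establish the four items in the order (2), (4), (1), (3), since each follows rapidly once its predecessors are in hand. The starting observation is that the eigenvalues $\lambda_1(\xx) \geq \cdots \geq \lambda_d(\xx)$, being the real roots of the monic polynomial $t \mapsto h(t\ee - \xx)/h(\ee)$ whose coefficients are polynomial in $\xx$, depend continuously on $\xx$; in particular $\lambda_j(\ee) = 1$ for every $j$. For (2), $\Lambda_{++}$ is open by continuity of $\lmi$, is contained in $\{h \neq 0\}$ by \eqref{prolambda}, and is path-connected to $\ee$ via the segment $\xx_s := (1-s)\ee + s\xx$, which stays in $\Lambda_{++}$ because \eqref{dilambdas} yields $\lambda_j(\xx_s) = (1-s) + s\lambda_j(\xx) > 0$ for $s \in [0,1]$. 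Hence $\Lambda_{++}$ is contained in the connected component $C$ of $\{h \neq 0\}$ through $\ee$; conversely, along any path in $C$ from $\ee$ to a point $\xx$ the eigenvalues are continuous, never vanish (since $h \neq 0$ there), and start at $\lambda_j(\ee) = 1$, so they remain positive, giving $\xx \in \Lambda_{++}$. Thus $C = \Lambda_{++}$.

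For (4), fix $\vv \in \Lambda_{++}(\ee)$ and $\xx \in \RR^n$ and consider the real polynomial $\phi(z) := h(\xx + z\vv)$ of degree $d$ with leading coefficient $h(\vv) \neq 0$. For real $z$, the factorization \eqref{prolambda} applied to $\xx + z\vv$ gives $\phi(z) = h(\ee) \prod_{j=1}^{d} \tilde\lambda_j(z)$, where $\tilde\lambda_j(z) := \lambda_j(\xx + z\vv)$ is continuous in $z$. Positive homogeneity and continuity, together with $\lambda_k(\vv) > 0$ for every $k$, show that each $\tilde\lambda_j$ runs from $-\infty$ to $+\infty$ as $z$ traverses $\RR$, so each has at least one real zero by the intermediate value theorem. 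Viewing the graphs $z \mapsto \tilde\lambda_j(z)$ as the $d$ branches of the real algebraic curve $\{(z,t) \in \RR^2 : h(\xx + z\vv - t\ee) = 0\}$, the intersection number of this curve with $\{t = 0\}$ equals the $z$-degree $d$ of $\phi$, and each of the $d$ branches contributes at least $1$; consequently the real zeros of $\phi$ have total multiplicity at least $d$, while $\phi$ has at most $d$ zeros in $\CC$ with multiplicity, so every zero of $\phi$ is real. Hence $h$ is hyperbolic with respect to $\vv$, and the equality $\Lambda_{++}(\vv) = \Lambda_{++}(\ee)$ follows from (2) applied to both $\ee$ and $\vv$, since $\vv \in \Lambda_{++}(\ee)$ places $\ee$ and $\vv$ in the same component of $\{h \neq 0\}$.

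From (4), items (1) and (3) are quick consequences. For (1), given $\uu, \vv \in \Lambda_{++}(\ee)$, (4) yields hyperbolicity of $h$ with respect to $\uu$ and $\Lambda_{++}(\uu) = \Lambda_{++}(\ee)$. Writing $\mu_j$ for the $\uu$-eigenvalues, the $\uu$-analog of \eqref{dilambdas} gives $\mu_j(\alpha\uu + (1-\alpha)\vv) = \alpha + (1-\alpha)\mu_j(\vv) > 0$ for $\alpha \in [0,1]$, so the segment lies in $\Lambda_{++}(\uu) = \Lambda_{++}(\ee)$. Convexity of $\Lambda_+$ follows by taking closures, and the cone property is immediate from homogeneity. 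For (3), the identity $\lmi(\xx - t\ee) = \lmi(\xx) - t$ from \eqref{dilambdas} gives $\lmi(\xx) = \max\{t \in \RR : \xx - t\ee \in \Lambda_+\}$; setting $a = \lmi(\xx)$ and $b = \lmi(\yy)$, convexity of $\Lambda_+$ puts $\alpha\xx + (1-\alpha)\yy - (\alpha a + (1-\alpha)b)\ee$ in $\Lambda_+$, whence $\lmi(\alpha\xx + (1-\alpha)\yy) \geq \alpha\lmi(\xx) + (1-\alpha)\lmi(\yy)$, proving concavity of $\lmi$. Convexity of $\lma$ follows since $\lma(\xx) = -\lmi(-\xx)$.

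The principal obstacle is the multiplicity bookkeeping in (4): each $\tilde\lambda_j$ is only continuous and may fail to be smooth at crossings of the eigenvalue branches, so one must justify that each branch contributes at least $1$ to the total intersection with $\{t = 0\}$ and that these contributions sum correctly. The cleanest resolution is the real-algebraic-curve viewpoint above, or equivalently a density argument: for generic $\xx$ in the complement of the discriminant locus, $\phi$ has $d$ distinct simple real zeros, one from each branch, and since the set of degree-$d$ real polynomials with only real zeros is closed under coefficient limits, the conclusion extends by continuity to every $\xx \in \RR^n$.
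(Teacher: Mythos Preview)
The paper does not give its own proof of this theorem; it simply refers the reader to G\aa rding and Renegar. So there is no ``paper's approach'' to compare against, and the question is just whether your argument stands on its own. Parts (2), (1) and (3) are fine and are essentially the standard deductions.

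The genuine gap is in (4). You correctly show that each ordered branch $\tilde\lambda_j(z)=\lambda_j(\xx+z\vv)$ is continuous, tends to $-\infty$ as $z\to-\infty$ and to $+\infty$ as $z\to+\infty$, and therefore vanishes somewhere. What is not justified is the passage from ``$d$ branches, each with a real zero'' to ``$\phi(z)=h(\xx+z\vv)$ has $d$ real zeros counted with multiplicity''. When several ordered branches vanish at the same point $z_0$, you would need $\operatorname{ord}_{z_0}\phi$ to be at least the number of coincident branches; but the ordered functions $\tilde\lambda_j$ are only continuous (they are not the analytic branches of the curve, and can have corners at crossings), so no local multiplicity is attached to their zeros. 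The ``intersection number'' sentence asserts exactly the statement at issue without proving it, and the genericity argument is incomplete as written: choosing $\xx$ off the discriminant locus of $\phi_\xx$ only makes the complex roots of $\phi_\xx$ simple, it does not yet tell you they are real, nor that distinct branches have distinct zeros. (The latter \emph{is} true, but it needs an extra step: if two branches vanish at $z_0$ then $t\mapsto h(t\ee-\xx-z_0\vv)$ has $t=0$ as a double root, and the real-rootedness of this family in $t$ for all nearby $z$ forces, via a short Weierstrass-preparation or Newton-inequality computation, $\operatorname{ord}_{z_0}\phi\ge 2$. You did not supply this.)

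There are two clean ways to close the gap. One is to replace the ordered eigenvalues by a real-analytic parametrization of the unordered roots along the line $z\mapsto\xx+z\vv$ (Rellich's theorem for one-parameter analytic families); each analytic branch still runs from $-\infty$ to $+\infty$, so contributes at least one zero with a well-defined order, and then $\phi=h(\ee)\prod_k\mu_k$ has at least $d$ real zeros with multiplicity. The other is the route the cited references take: deform the direction along the segment $\ee_s=(1-s)\ee+s\vv\in\Lambda_{++}$ and use a Hurwitz/connectedness argument in the complex $t$-plane (tracking the roots in $\sigma$ of $h(\xx+\sigma\ee+\tau\vv)$ as $\tau$ leaves the real axis) to show $h(\xx+\tau\vv)\neq 0$ whenever $\Im\tau\neq 0$. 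Either replaces your branch-counting step with something rigorous.
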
 

Recall that the \emph{lineality space}, $L(C)$, of a convex cone $C\subseteq \RR^n$ is $C \cap (-C)$, i.e., the largest linear space contained in $C$. It follows that $L(\Lambda_+)= \{\xx \in \RR^n : \lambda_i(\xx)=0 \mbox{ for all } i\}$, see e.g. \cite{Ren}.

The \emph{trace}, \emph{rank} and \emph{spectral radius} (with respect to $\ee$) of  $\xx \in \RR^n$ are defined as for matrices: 
$$
\tr(\xx) = \sum_{i=1}^d\lambda_i(\xx), \ \ \rk(\xx)= \#\{ i : \lambda_i(\xx)\neq 0\} \ \  \mbox{ and } \ \  \|\xx\| = \max_{1\leq i\leq d} |\lambda_i(\xx)|.
$$
Note that $\| \xx \| = \max\{ \lma(\xx), -\lmi(\xx)\}$ and hence $ \| \cdot \|$ is convex by Theorem~\ref{hypfund}~(3). It follows that $\| \cdot \|$ is a seminorm, and that $\| \xx \|=0$ if and only if $\xx \in L(\Lambda_+)$. Hence $\| \cdot \|$ is a norm if and only if  $L(\Lambda_+)=\{0\}$. 

For a positive integer $r$, let $U_r$ be the set of all pairs $(\delta,\mu)$ of positive real numbers such that 
\begin{equation}\label{dura}
\delta-1 \geq \frac \delta \mu \cdot \frac { \left(1+ \frac \delta {r \mu} \right)^{r-1}- \left(\frac \delta {r \mu} \right)^{r-1}} { \left(1+ \frac \delta {r \mu} \right)^{r}- \left(\frac \delta {r \mu} \right)^{r}},
\end{equation}
and either 
\begin{itemize}
\item $\mu >1$, or 
\item $1\leq \delta \leq 2$ and $\mu >1-\delta/r$. 
\end{itemize}
It is not hard to see that $U_1 \supseteq U_2 \supseteq U_3 \supseteq \cdots \supseteq U_\infty$,
where $U_\infty = \{(\delta,\mu) \in \RR_{++}^2 : \mu \geq \delta /(\delta-1) \}$. 
Let further
$$
\delta(\epsilon, m,r) = \inf \left\{ \frac {\epsilon \mu+ \left(1-\frac 1 m\right)\delta} {1-\frac 1 m + \frac \mu m }  : (\delta, \mu) \in U_r \right\}.
$$
For applications we often want a bound which is independent of $m$. We define 
$$
 \delta(\epsilon, \infty,r) = \inf \left\{ \epsilon \mu+ \delta  : (\delta, \mu) \in U_r \right\}, \ \ \ \ r=1,2,3,\ldots, \infty.
$$
The definition of $\delta(\epsilon, m,r)$ may seem obscure. To make sense of it we will compute it explicitly for some cases and give upper bounds (Theorem~\ref{specifics}). 
The following theorem generalizes Theorem~\ref{MSSmain} to hyperbolic polynomials. It applies to arbitrary ranks as well as improves the bound in \eqref{sqbound1}. 
\begin{theorem}\label{t1}
Let $k\geq 2$ be an integer and $\epsilon$ a positive real number. Suppose $h$ is hyperbolic  with respect to $\ee \in \RR^n$, and let $\uu_1, \ldots, \uu_m \in \Lambda_{+}(h,\ee)$ be such that 
\begin{itemize}
\item[] $\tr(\uu_i) \leq \epsilon$ for all $1\leq i \leq m$, 
\item[] $\rk(\uu_i) \leq r$ for all $1\leq i \leq m$, and
\item[] $\uu_1+ \uu_2+\cdots+ \uu_m=\ee$. 
\end{itemize}
Then there is a partition $S_1\cup S_2 \cup \cdots \cup S_k=[m]$ such that 
\begin{equation}\label{sqbound}
\left\| \sum_{i \in S_j} \uu_i  \right\| \leq \frac 1 k \delta(k\epsilon, m,rk),    
\end{equation}
for each $j \in [k]$. 
\end{theorem}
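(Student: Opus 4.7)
My plan is to extend the Marcus--Spielman--Srivastava probabilistic partitioning strategy to the hyperbolic setting, assembling the machinery built in Sections~\ref{scompatible}--\ref{sboundmix}. Form the product polynomial
\[
H(\xx_1,\ldots,\xx_k) := h(\xx_1)\cdots h(\xx_k),
\]
and set $\hat\ee := (\ee,\ldots,\ee)\in (\RR^n)^k$. Since $H(t\hat\ee-(\xx_1,\ldots,\xx_k))=\prod_{j=1}^k h(t\ee-\xx_j)$ is a product of real-rooted polynomials in $t$, $H$ is hyperbolic with respect to $\hat\ee$, and the eigenvalues of $(\xx_1,\ldots,\xx_k)$ are the union of those of the $\xx_j$; in particular $\|(\xx_1,\ldots,\xx_k)\|_H=\max_j\|\xx_j\|_h$. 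For each $i\in[m]$, let $\vv_i$ be the random element of $\Lambda_+(H,\hat\ee)$ taking each of the $k$ values $(0,\ldots,0,\uu_i,0,\ldots,0)$, with $\uu_i$ in block $j$, with probability $1/k$. A joint realization corresponds to a partition $S_1\cup\cdots\cup S_k=[m]$ with $\|\sum_i\vv_i\|_H=\max_j \|\sum_{i\in S_j}\uu_i\|_h$. Hence it suffices to exhibit a realization with $\lma^{H,\hat\ee}(\sum_i\vv_i)\le (1/k)\,\delta(k\epsilon,m,rk)$.

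The second step realizes the random polynomials $\{H(t\hat\ee-\sum_i\vv_i)\}$ as a compatible family in the sense of Section~\ref{scompatible}. This is what the mixed hyperbolic polynomial construction of Section~\ref{smixed} is designed for: averaging a shifted hyperbolic polynomial over directions lying in the hyperbolicity cone preserves hyperbolicity along with the interlacing required for compatibility, because each placement of $\uu_i$ into a block lies in $\Lambda_+(H,\hat\ee)$. The defining property of a compatible family then guarantees the existence of a realization whose largest root is at most that of the expected polynomial
\[
\mu(t) := \EE\!\left[H\!\left(t\hat\ee - \sum_{i=1}^m \vv_i\right)\right].
\]

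Third, rescaling each summand by $k$ so that $\sum_i \EE[k\vv_i]=\hat\ee$, the polynomial $\EE[H(t\hat\ee-k\sum_i\vv_i)]$ matches the mixed characteristic polynomial of Section~\ref{sboundmix} attached to the $m$ tuples formed by the $k$ block placements of $k\uu_i$; each option has $H$-trace at most $k\epsilon$ and rank at most $r$, and together the $k$ branches per index feed the parameters $(k\epsilon, m, rk)$ into the upper-bound machinery (the product $rk$ tracking the total rank across the $k$ branches per index). Applying the barrier inequalities of Section~\ref{scorr} to this mixed characteristic polynomial bounds its largest zero by $\delta(k\epsilon,m,rk)$, and by homogeneity of the $\lambda_j$ this gives $\lma^{H,\hat\ee}(\sum_i\vv_i)\le (1/k)\,\delta(k\epsilon,m,rk)$ for the realization found in the previous step.

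The principal obstacle is the barrier estimate underlying Sections~\ref{scorr}--\ref{sboundmix}: one needs a purely hyperbolic analog of the MSS barrier function $\partial_{\vv}\log p(\xx)$, evaluated at arguments above the current largest root, together with quantitative control of how each ``$(1-\partial_{z_i})$''-type operator (or its higher-rank hyperbolic generalization) shifts this barrier. Deriving these inequalities from hyperbolicity alone---without invoking Helton--Vinnikov or matrix-analytic trace/Schur identities---and then optimizing the shift pair $(\delta,\mu)\in U_{rk}$ is what yields the improved function $\delta(k\epsilon,m,rk)$ and hence the refined bound \eqref{sqbound}.
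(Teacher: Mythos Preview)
Your high-level strategy---form the product polynomial $H(\xx_1,\ldots,\xx_k)=\prod_j h(\xx_j)$, place each $\uu_i$ randomly into one of the $k$ blocks, and apply the compatible-family/barrier machinery---is exactly the paper's. However, there is a genuine gap in how you invoke Section~\ref{smixed}.

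You assert that the family $\{H(t\hat\ee-\sum_i\vv_i)\}$ is compatible and that $\EE[H(t\hat\ee-k\sum_i\vv_i)]$ coincides with the mixed characteristic polynomial. Both statements hold only when all the random vectors have rank at most one (this is precisely the content of Lemma~\ref{rk1le} and the second part of Theorem~\ref{mixedchar}). For $r\ge 2$ they fail: already for $h(x_1,x_2)=x_1x_2$, $\ee=(1,1)$, and $V_1=\{\mathbf 0,(1,1)\}$ (rank two), the family $\{h(t\ee-\vv_1):\vv_1\in V_1\}=\{t^2,(t-1)^2\}$ is not compatible, since $pt^2+(1-p)(t-1)^2$ has negative discriminant for $0<p<1$. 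So your step~2 breaks down, and step~3 does not compute what you claim.

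The paper repairs this by never working with $H(t\hat\ee-\sum_i\vv_i)$ directly. Instead it uses the \emph{mixed} polynomials $H[\vv_1,\ldots,\vv_m](t\hat\ee+\one)$, which form a compatible family for arbitrary ranks (Theorem~\ref{mixedchar}), and for which the multilinearity identity \eqref{mixedexp} gives $\EE H[k\XR_1,\ldots,k\XR_m]=H[\EE k\XR_1,\ldots,\EE k\XR_m]$. Theorem~\ref{expfam} then produces a realization whose mixed largest root $\lma(\vv_1,\ldots,\vv_m)$ is bounded by $\lma(\EE k\XR_1,\ldots,\EE k\XR_m)$, and the missing bridge back to the quantity you actually want, $\lma(\sum_i\vv_i)$, is Theorem~\ref{linearize}. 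Finally, the parameter $rk$ in $\delta(k\epsilon,m,rk)$ arises not from the individual block placements (which have rank $\le r$) but from the \emph{expected} vectors $\EE[k\vv_i]=(\uu_i,\ldots,\uu_i)$, which have rank $k\cdot\rk(\uu_i)\le kr$; this is the rank hypothesis fed into Theorem~\ref{mainbound2}.
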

Note that we recover Theorem~\ref{MSSmain} if we let $h= \det$ in Theorem~\ref{t1}, combined with \eqref{a1} below.  Theorem~\ref{t1} combined with \eqref{a2} extends the main result of \cite{R2} to hyperbolic polynomials. Theorem~\ref{t1} combined with \eqref{a3} produces better bounds in Theorem~\ref{MSSmain} for all $r >2$.   
\begin{theorem}\label{specifics}
\begin{align}
\delta(\epsilon,m, \infty) &= \left( 1-\frac 1 m +\sqrt{\epsilon - \frac 1 m \left(1-\frac 1 m\right)}\right)^2,  \label{a0} \\ 
\delta(\epsilon,\infty, \infty) &= (1+\sqrt{\epsilon})^2 =1+2\sqrt{\epsilon}+\epsilon, \mbox{ for all } \epsilon>0, \label{a1}\\
\delta(\epsilon, \infty, 2) &= 
\begin{cases}
1 + 2\sqrt{\epsilon}\sqrt{1-\epsilon}, &\mbox{ if } 0<\epsilon \leq 1/2 \\
2, &\mbox{ if } \epsilon >1/2. 
\end{cases}\label{a2}\\
\delta(\epsilon, \infty, r) &\leq \begin{cases}
1+2\sqrt{\epsilon}\sqrt{1-\epsilon/r}+ \frac {r-1} r \epsilon, &\mbox{ if } \epsilon \leq r/(r+1), \\ 
2+\epsilon(1-2/r), &\mbox{ if } \epsilon > r/(r+1).
\end{cases}\label{a3}
 \end{align}
\end{theorem}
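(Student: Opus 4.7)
My plan is to treat the four statements separately.  In each case, the (fractional-)linear objective is non-decreasing in $\mu$ for fixed $\delta$, so the infimum over $U_r$ is attained (or approached) on the boundary where \eqref{dura} holds with equality; I parametrize that boundary and reduce to a one-variable optimization tractable by AM--GM or elementary calculus.

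For (a0) and (a1), the set $U_\infty$ is $\{(\delta,\mu):\mu\ge \delta/(\delta-1),\ \delta>1\}$; put $u=\delta-1$ so that $\mu=1+1/u$ on the boundary.  The objective for (a1) becomes $1+\epsilon+u+\epsilon/u$, whose AM--GM minimum is $(1+\sqrt\epsilon)^2$ at $u=\sqrt\epsilon$.  For (a0), the objective simplifies to $g(u)=m(1+u)[\epsilon+(1-1/m)u]/(mu+1)$; setting $g'(u)=0$ gives the quadratic $mu^2+2u-(m\epsilon-1)=0$, with positive root $u_\star=(\sqrt{m^2\epsilon-m+1}-1)/m$.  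Writing $a=1-1/m$ and $\beta=\sqrt{\epsilon-a/m}$, one checks $mu_\star+1=m\beta$, $1+u_\star=a+\beta$, and $\epsilon+au_\star=\beta(a+\beta)$, so $g(u_\star)=(a+\beta)^2$, matching the claim.

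For (a2), the identities $(1+y)^2-y^2=1+2y$ and $(1+y)-y=1$ reduce the $U_2$-defining inequality to $\mu(\delta-1)\ge \delta(2-\delta)$.  For $1<\delta\le 2$ this gives the boundary $\mu=\delta(2-\delta)/(\delta-1)$; parametrizing by $u=\delta-1\in(0,1]$ yields $\mu=(1-u^2)/u$ and objective $1+(1-\epsilon)u+\epsilon/u$, whose AM--GM minimum is $1+2\sqrt{\epsilon(1-\epsilon)}$ at $u_\star=\sqrt{\epsilon/(1-\epsilon)}$, feasible ($u_\star\le 1$) iff $\epsilon\le 1/2$.  For $\epsilon>1/2$ the region $\delta\ge 2$, where the main constraint is vacuous and the side condition permits $\mu\downarrow 0$ at $\delta=2$, yields infimum $2$.

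For (a3), the task is to exhibit a feasible pair $(\delta,\mu)\in U_r$ realizing the bound.  Parametrize the $U_r$-boundary by $w=y/(1+y)\in(0,1)$ with $y=\delta/(r\mu)$; the substitution $(1+y)^k-y^k=(1-w^k)/(1-w)^k$ yields the explicit formulas $\delta(w)=[1+rw-(r+1)w^r]/(1-w^r)$ and $\mu(w)=\delta(w)(1-w)/(rw)$.  A short algebraic check shows $\epsilon\mu(w)+\delta(w)\le \tilde f(w):=(1+rw)[(r-\epsilon)w+\epsilon]/(rw)$ for all $w\in(0,1)$ (the discarded factor being $\le 1$ iff $w\le 1$).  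Expanding $\tilde f(w)=(r-\epsilon)w+[1+(r-1)\epsilon/r]+\epsilon/(rw)$ and applying AM--GM gives minimum $1+2\sqrt{\epsilon(1-\epsilon/r)}+(r-1)\epsilon/r$ at $w_\star=\sqrt{\epsilon/(r(r-\epsilon))}$, which lies in $(0,1)$ for $\epsilon<r^2/(r+1)$; this covers the first case.  The second case $\epsilon>r/(r+1)$ is the principal obstacle: it requires a distinct construction (not directly on this boundary parametrization), likely pushing $\delta$ and $\mu$ to a different limiting regime to obtain the linear bound $2+\epsilon(1-2/r)$.
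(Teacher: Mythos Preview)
Your arguments for \eqref{a0}, \eqref{a1}, and \eqref{a2} are correct and agree with the paper's: for $r=\infty$ the paper says only that the infimum is ``easily computed,'' and for $r=2$ it rewrites the boundary as $\mu\ge(\delta-1)^{-1}-(\delta-1)$ and optimizes exactly as you do. One small omission in your \eqref{a2}: for $\epsilon>1/2$ you exhibit the value $2$ at $\delta=2,\ \mu\downarrow0$ but do not check that nothing smaller occurs; this follows since on $(0,1]$ the function $u\mapsto 1+(1-\epsilon)u+\epsilon/u$ is decreasing (its critical point sits at $u>1$) and equals $2$ at $u=1$, while the region $\delta>2$ forces $\mu>1$ and hence $\epsilon\mu+\delta>\epsilon+2$.

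For \eqref{a3} your route is equivalent to the paper's, just packaged differently. The paper applies the elementary inequality
\[
\frac{(1+x)^{r-1}-x^{r-1}}{(1+x)^r-x^r}\le\frac1{1+x}
\]
to replace \eqref{dura} by the stronger constraint $\mu\ge 1+\tfrac1{\delta-1}-\tfrac\delta r$, then minimizes $\epsilon\mu+\delta$ along that curve for $1<\delta\le2$. Your bound $\delta(w)\le 1+rw$ is literally the same inequality in your coordinates, and your $\tilde f(w)$ coincides with the paper's relaxed objective under the substitution $u:=\delta-1=rw$. A gap you should close: you must verify that the boundary point $(\delta(w_\star),\mu(w_\star))$ actually lies in $U_r$. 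It does, because $\epsilon\le r/(r+1)$ gives $w_\star\le 1/r$, whence $\delta(w_\star)\le 1+rw_\star\le2$ and $\mu(w_\star)=\delta(w_\star)(1-w_\star)/(rw_\star)>1-\delta(w_\star)/r$ (the last reduces to $\delta(w_\star)>rw_\star$, which holds for $w_\star\le1/r$).

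The second case of \eqref{a3} is not an obstacle and requires no new idea: in the paper's parametrization one simply takes the endpoint $\delta=2$ (equivalently $u=1$, or $w=1/r$ in yours), where the side condition still holds, and reads off the value
\[
\epsilon\Bigl(1+\tfrac1{1}-\tfrac2r\Bigr)+2=2+\epsilon\Bigl(2-\tfrac2r\Bigr).
\]
You are stuck only because you are trying to reach the printed expression $2+\epsilon(1-2/r)$, which is a typo: the paper's own proof computes exactly the infimum above, and moreover the two branches of \eqref{a3} meet continuously at $\epsilon=r/(r+1)$ (both equal $4r/(r+1)$) only with the value $2+\epsilon(2-2/r)$, whereas $2+\epsilon(1-2/r)$ would give $3r/(r+1)$ there. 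So your method, evaluated at $w=1/r$, already yields the intended bound.
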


\section{Compatible families of polynomials}\label{scompatible}
We say that a univariate real polynomial $f$ is \emph{real-rooted} if either $f \equiv 0$, or $f$ has only real zeros. 
Let $f$ and $g$ be two real-rooted polynomials of degree $d-1$ and $d$, respectively, where $d\geq 1$. We say that $f$ is an \emph{interleaver} of $g$ if 
$$
\beta_1 \leq \alpha_1\leq \beta_2 \leq \alpha_2 \leq \cdots \leq \alpha_{d-1} \leq \beta_d, $$ 
where $\alpha_1 \leq \cdots \leq \alpha_{d-1}$ and $\beta_1 \leq \cdots \leq \beta_{d}$ are the zeros of $f$ and $g$, respectively. 

A family of polynomials $\{f_1(x), \ldots, f_m(x)\}$ of real-rooted polynomials of the same degree and the same sign of leading coefficients  is called \emph{compatible} if it satisfies any of the equivalent conditions in the next theorem. Theorem~\ref{CS} has been discovered several times. We refer to \cite[Theorem 3.6]{CS} for a proof. 
\begin{theorem}\label{CS}
Let $f_1, \ldots, f_m$ be real-rooted polynomials of the same degree and with positive leading coefficients. The following are equivalent. 
\begin{enumerate}
\item $f_1, \ldots, f_m$ have a common interleaver, i.e., there is a polynomial $g$ which is an interleaver of each $f_i$, $1\leq i \leq m$. 
\item for all $p_1, \ldots, p_m \geq 0$, $\sum_{i}p_i=1$, the polynomial
$$
p_1f_1+ \cdots+ p_mf_m
$$
is real-rooted. 
\end{enumerate}
\end{theorem}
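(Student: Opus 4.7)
The plan is to prove the two implications separately: (1) $\Rightarrow$ (2) by a sign count at the zeros of the common interleaver, and (2) $\Rightarrow$ (1) by reducing to the pairwise Hermite--Kakeya--Obreschkoff (HKO) theorem.

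For (1) $\Rightarrow$ (2), let $g$ be a common interleaver with zeros $\gamma_1 \leq \cdots \leq \gamma_{d-1}$. After a small perturbation (valid because real-rootedness of degree-$d$ polynomials is closed under uniform limits with no degree drop), I may assume the $\gamma_j$ are distinct and avoid every zero of every $f_i$. The interleaving condition together with positivity of the leading coefficients then pins down $\sign(f_i(\gamma_j)) = (-1)^{d-j}$, independent of $i$. Any convex combination $f = \sum_i p_i f_i$ therefore inherits the same alternating signs at $\gamma_1, \ldots, \gamma_{d-1}$, producing $d-2$ sign changes across $(\gamma_1, \gamma_{d-1})$. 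Adding one sign change on each of $(-\infty, \gamma_1)$ and $(\gamma_{d-1}, +\infty)$, dictated by the leading-term asymptotics and the endpoint signs $(-1)^{d-1}$ and $-1$, yields $d$ real zeros, so $f$ is real-rooted.

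For (2) $\Rightarrow$ (1), I would invoke HKO: two real-rooted polynomials of the same degree with positive leading coefficients have a common interleaver iff every nonnegative combination is real-rooted. Specializing (2) to weights supported on a pair $\{i,j\}$ supplies the HKO hypothesis, so each pair $(f_i, f_j)$ admits a common interleaver. Writing the ordered zeros of $f_i$ as $\alpha_1^{(i)} \leq \cdots \leq \alpha_d^{(i)}$, the existence of a common interleaver for a pair forces $\alpha_k^{(i)} \leq \alpha_{k+1}^{(j)}$ for every triple $(i,j,k)$, whence $M_k := \max_i \alpha_k^{(i)} \leq \min_j \alpha_{k+1}^{(j)} =: m_{k+1}$. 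Selecting any $\gamma_k \in [M_k, m_{k+1}]$, the evident chain $\gamma_k \leq m_{k+1} \leq M_{k+1} \leq \gamma_{k+1}$ is automatic, so $g(x) = \prod_{k=1}^{d-1}(x - \gamma_k)$ is a common interleaver of the whole family.

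The substantive obstacle is HKO itself, which is classical but not a one-liner. A self-contained alternative is to view $F_p(x) = \sum_i p_i f_i(x)$ as a continuous family indexed by the simplex: under (2), the ordered root functions $\zeta_1(p) \leq \cdots \leq \zeta_d(p)$ are continuous, and the goal becomes showing $\sup_p \zeta_k(p) \leq \inf_p \zeta_{k+1}(p)$ for each $k$. Ruling out a violating pair $(p,q)$ with $\zeta_k(p) > \zeta_{k+1}(q)$ requires producing a convex combination with a non-real root, typically via a discriminant or argument-principle computation along the segment from $p$ to $q$, and this is where the genuine analytic content of the theorem resides.
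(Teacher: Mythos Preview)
The paper does not prove this theorem at all: it simply states that the result ``has been discovered several times'' and refers to \cite[Theorem 3.6]{CS} for a proof. So there is no in-paper argument to compare against; your write-up is already more than the paper provides.

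On its own merits your outline is the standard one and is essentially correct. For $(1)\Rightarrow(2)$ the sign-alternation count is right, and for $(2)\Rightarrow(1)$ the reduction to the pairwise case via HKO, followed by the $M_k\le m_{k+1}$ construction of a simultaneous interleaver, is exactly how the Chudnovsky--Seymour argument runs. Two small comments. First, your perturbation in $(1)\Rightarrow(2)$ is stated as a perturbation of the interleaver $g$, but that can be obstructed: if $\max_i \alpha_k^{(i)}=\min_i \alpha_{k+1}^{(i)}$ then $\gamma_k$ is pinned and cannot be moved off the zero set of the $f_i$. The clean fix is either to factor out common linear factors forced by repeated $\gamma_k$'s and reduce to $g$ with simple roots, or to run the sign argument with weak inequalities $(-1)^{d-j}f(\gamma_j)\ge 0$ and appeal to a limiting argument on the $f_i$ rather than on $g$. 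Second, as you yourself flag, the substantive content sits entirely in the $m=2$ case (HKO); your final paragraph gestures at a direct proof via continuity of ordered roots over the simplex but does not close it, so as written you are still citing HKO as a black box---which is exactly what the paper does, one level up.
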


\begin{lemma}[\cite{MSS1}]\label{largestz}
Let $f_1,\ldots, f_m$ be real-rooted polynomials 
that have the same degree and positive leading coefficients, and suppose $p_1, \ldots, p_m \geq 0$ sum to one.  If $\{f_1,\ldots, f_m\}$ is compatible, then for some $1 \leq i \leq m$ with $p_i >0$ the largest zero of $f_i$ is smaller or equal to the largest zero of the polynomial 
$$
 f=p_1f_1 + p_2f_2 + \cdots + p_mf_m.
$$\end{lemma}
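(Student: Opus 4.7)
The plan is to argue by contradiction: assume that $\mu_i > \lambda$ for every $i$ with $p_i > 0$, where $\mu_i$ denotes the largest zero of $f_i$ and $\lambda$ is the largest zero of $f := \sum_i p_i f_i$ (real-rooted by Theorem~\ref{CS}~(2)). Invoking Theorem~\ref{CS}~(1), I would fix a common interleaver $g$ of degree $d-1$ with zeros $\alpha_1 \leq \cdots \leq \alpha_{d-1}$. The signs of $f_i(\alpha_j)$ alternate in $j$ in a pattern determined entirely by the interleaving (and hence identical for every $i$), so the convex combination $f$ exhibits the same alternation at the $\alpha_j$ and must itself be interleaved by $g$; in particular $\alpha_{d-1} \leq \lambda$.

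The decisive computation is at $x = \lambda$. Writing $f_i(\lambda) = c_i \prod_{j=1}^{d}(\lambda - \beta_j^{(i)})$, where $\beta_1^{(i)} \leq \cdots \leq \beta_d^{(i)} = \mu_i$ are the zeros of $f_i$, the interleaving relations $\beta_j^{(i)} \leq \alpha_j \leq \alpha_{d-1} \leq \lambda$ for $j \leq d-1$ make the first $d-1$ factors nonnegative, while the final factor $(\lambda - \mu_i)$ is strictly negative under our assumption. Hence $f_i(\lambda) \leq 0$ for every $i$ with $p_i > 0$. Because $\sum_i p_i f_i(\lambda) = f(\lambda) = 0$ with all summands nonpositive, each summand must vanish, forcing $\lambda = \beta_j^{(i)}$ for some $j \leq d-1$; combined with $\beta_{d-1}^{(i)} \leq \alpha_{d-1} \leq \lambda$, this squeezes out the tight equalities $\beta_{d-1}^{(i)} = \alpha_{d-1} = \lambda$ for every relevant $i$.

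To close the argument I would then evaluate at a point $x_0 \in (\lambda, \min_{i: p_i > 0} \mu_i)$, which is nonempty under the contradiction hypothesis. Since $\beta_j^{(i)} \leq \lambda < x_0$ for $j \leq d-1$, every factor $(x_0 - \beta_j^{(i)})$ is now strictly positive, while $(x_0 - \mu_i) < 0$; thus $f_i(x_0) < 0$ for every relevant $i$, giving $f(x_0) < 0$. But $\lambda$ is the largest zero of a real-rooted polynomial with positive leading coefficient, so $f(x_0) > 0$, the desired contradiction. The main obstacle is precisely the degenerate case where $\lambda$ coincides with $\alpha_{d-1}$ and the sign analysis at $\lambda$ itself becomes inconclusive; the resolution is to slide from $\lambda$ to the nearby point $x_0 > \lambda$, where the interleaving forces every $f_i$ to be strictly negative in the interval $(\lambda, \min_i \mu_i)$.
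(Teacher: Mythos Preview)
Your argument is correct and rests on the same ingredients as the paper's: the common interleaver $g$ from Theorem~\ref{CS}, the sign pattern it forces on each $f_i$, and an evaluation at the largest zero of $f$. The paper, however, runs the argument directly rather than by contradiction: since $f_i(\alpha_{d-1})\le 0$ for every $i$, the largest zero $\beta$ of $f$ satisfies $\beta\ge\alpha_{d-1}$; from $f(\beta)=0$ one immediately picks an index $i$ with $p_i>0$ and $f_i(\beta)\ge 0$, and the interleaving (which makes $f_i<0$ on $(\alpha_{d-1},\mu_i)$) forces $\mu_i\le\beta$. Your steps 4--5 and the slide to $x_0$ amount to spelling out that last implication in the degenerate case $\beta=\alpha_{d-1}$, which the paper leaves implicit; in fact you could skip steps 4--5 entirely and go straight from $\alpha_{d-1}\le\lambda$ to the evaluation at $x_0\in(\lambda,\min_i\mu_i)$, since $\beta_j^{(i)}\le\alpha_{d-1}\le\lambda<x_0$ already gives $f_i(x_0)<0$.
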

\begin{proof}
If $\alpha$ is the largest zero of the common interleaver $g$, then $f_i(\alpha) \leq 0$ for all $i$.  Hence the largest zero, $\beta$,  of $f$ is located in the interval $[\alpha, \infty)$, as are the largest zeros of $f_i$ for each $1\leq i \leq m$. Since $f(\beta)=0$, there is an index $i$ with $p_i >0$ such that $f_i(\beta) \geq 0$. Hence the largest zero of $f_i$ is at most $\beta$. 
\end{proof}

The next definition may be seen as a generalization of compatible polynomials to arrays. 

\begin{definition}
Let $S_1, \ldots, S_m$ be finite sets. A family, $$\FF=\{f(\ss;t)\}_{\ss \in S_1 \times \cdots \times S_m} \subset \RR[t],$$ of polynomials  is called \emph{compatible}  if 
\begin{itemize}
\item all non-zero members of $\FF$ have the same degree and the same signs of their leading coefficients, and 
\item for all choices of independent random variables 
$\XR_1 \in S_1, \ldots, \XR_m \in S_m$, the polynomial 
$
\EE f(\XR_1,\ldots, \XR_n;t) 
$
is real-rooted. 
\end{itemize}
\end{definition}

The notion of a compatible family of polynomials is less general than that of an \emph{interlacing family of polynomials} in \cite{MSS1,MSS2}. However since all families appearing here (and in \cite{MSS1,MSS2}) are compatible, we find it more convenient to work with these. The following theorem is in essence from \cite{MSS1}. 

\begin{theorem}\label{expfam}
Let $\{f(\ss;t)\}_{\ss \in S_1 \times \cdots \times S_m}$ be a compatible family, and let \ $\XR_1 \in S_1, \ldots, \XR_m \in S_m$ be independent random variables such that $\EE f(\XR_1,\ldots, \XR_m;t) \not \equiv 0$. Then there is a tuple $\ss=(s_1, \ldots, s_n) \in S_1 \times \cdots \times S_m$, with $\PP[\XR_i=s_i]>0$ for each $1\leq i \leq m$, such that the largest zero of $f(s_1,\ldots, s_m;t)$ is smaller or equal to the largest zero of 
$\EE f(\XR_1,\ldots, \XR_m;t)$. 
\end{theorem}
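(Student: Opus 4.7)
The plan is to prove this by induction on $m$, extracting one coordinate at a time via Lemma \ref{largestz}. The base case $m=1$ is exactly Lemma \ref{largestz} applied to the univariate family $\{f(s;t)\}_{s\in S_1}$ with weights $p_s = \PP[\XR_1 = s]$ (restricted to those $s$ with $f(s;t)\not\equiv 0$).

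For the inductive step, first define for each $s \in S_1$ the conditional expectation
$$g_s(t) = \EE f(s, \XR_2, \ldots, \XR_m; t).$$
The claim is that the one-variable family $\{g_s(t) : g_s \not\equiv 0\}$ is compatible in the sense of Theorem \ref{CS}. Indeed, for any weights $p_s \geq 0$ summing to one, letting $Y_1 \in S_1$ be an independent random variable with $\PP[Y_1 = s] = p_s$, we have
$$\sum_{s \in S_1} p_s\, g_s(t) = \EE f(Y_1, \XR_2, \ldots, \XR_m; t),$$
which is real-rooted by compatibility of $\FF$. The nonzero $g_s$ are nonnegative combinations of elements of $\FF$, so they share the common degree and sign of leading coefficient. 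The hypothesis $\EE f(\XR_1, \ldots, \XR_m; t) \not\equiv 0$ guarantees that $g_s \not\equiv 0$ for some $s$ with $\PP[\XR_1 = s] > 0$, so applying Lemma \ref{largestz} with weights proportional to $\PP[\XR_1 = s]\,\one[g_s \not\equiv 0]$ produces an $s_1 \in S_1$ with $\PP[\XR_1 = s_1] > 0$, $g_{s_1} \not\equiv 0$, and the largest zero of $g_{s_1}(t)$ bounded above by that of $\EE f(\XR_1, \ldots, \XR_m; t)$.

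Now consider the restricted family $\FF' = \{f(s_1, s_2, \ldots, s_m; t)\}_{(s_2, \ldots, s_m) \in S_2 \times \cdots \times S_m}$. It is compatible: for any independent $Y_i \in S_i$ with $i\geq 2$, taking $Y_1 \equiv s_1$ deterministically in the definition of compatibility for $\FF$ shows that $\EE f(s_1, Y_2, \ldots, Y_m; t)$ is real-rooted; the degree and sign-of-leading-coefficient conditions are inherited from $\FF$. Its expectation against $\XR_2, \ldots, \XR_m$ equals $g_{s_1}(t)$, which is nonzero by the choice of $s_1$. Applying the induction hypothesis to $\FF'$ yields $s_2, \ldots, s_m$ with $\PP[\XR_i = s_i] > 0$ for $i \geq 2$ and the largest zero of $f(s_1, \ldots, s_m; t)$ at most that of $g_{s_1}(t)$. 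Chaining the two bounds gives the conclusion. The only subtle point is the bookkeeping around degenerate zero polynomials in the family $\{g_s\}$, which is handled by restricting to the indices with $g_s \not\equiv 0$ before invoking Lemma \ref{largestz}; the rest is a clean induction.
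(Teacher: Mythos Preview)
Your proof is correct and follows essentially the same inductive scheme as the paper's: peel off one coordinate using Lemma~\ref{largestz} (after verifying compatibility of the conditional expectations via Theorem~\ref{CS}), then recurse on the remaining coordinates. The only cosmetic differences are that you strip off $S_1$ first while the paper strips off $S_m$ last, and you are somewhat more explicit about handling the degenerate $g_s\equiv 0$ cases before invoking Lemma~\ref{largestz}.
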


\begin{proof}
The proof is by induction over $m$. The case when $m=1$ is Lemma~\ref{largestz}, so suppose $m>1$.  If 
$S_m=\{c_1,\ldots, c_k\}$, then  
$$
\EE f(\XR_1,\ldots, \XR_m;t)= \sum_{i=1}^k q_i \EE f(\XR_1,\ldots, \XR_{m-1}, c_i;t), 
$$
for some $q_i \geq 0$. However 
$$
\sum_{i=1}^k p_i \EE f(\XR_1,\ldots, \XR_{m-1}, c_i;t)
$$
is real-rooted for all choices of $p_i \geq 0$ such that $\sum_ip_i=1$. By Lemma~\ref{largestz} and Theorem~\ref{CS} there is an index $j$ with $q_j>0$ such that $\EE f(\XR_1,\ldots, \XR_{m-1}, c_j;t) \not \equiv 0$ and such that the largest zero of $\EE f(\XR_1,\ldots, \XR_{m-1}, c_j;t)$ is no larger than the largest zero of $\EE f(\XR_1,\ldots, \XR_m;t)$. The theorem now follows by induction. 
 \end{proof}

 \section{Mixed hyperbolic polynomials}\label{smixed}
 In this section we will produce a large class of compatible families of polynomials arising from (mixed) hyperbolic polynomials. 
  
 Recall that the \emph{directional derivative} of $h(\xx) \in  \RR[x_1,\ldots, x_n]$ with respect to $\vv =(v_1,\ldots, v_n)^T \in \RR^n$ is defined as 
 $$
 D_\vv h(\xx) := \sum_{k=0}^n v_k \frac{ \partial h }{\partial x_k}(\xx),
 $$
 and note that 
 \begin{equation}\label{dvalt}
  (D_\vv h)(\xx+t\vv) = \frac d {dt} h(\xx+ t \vv) . 
\end{equation}
If $h$ is hyperbolic with respect to $\ee$, then 
$$
\tr(\vv)= \frac {D_\vv h(\ee)}{h(\ee)},
$$
by \eqref{dalambdas}. Hence $\vv \rightarrow \tr(\vv)$ is linear.

The following theorem is essentially known, see e.g. \cite{BGLS,Ga,Ren}. However we need slightly more general results, so we provide proofs below, when necessary. 
\begin{theorem}\label{direct}
Let $h$ be a hyperbolic polynomial and let $\vv \in \Lambda_+$ be such that   $D_\vv h \not \equiv 0$. Then 
\begin{enumerate}
\item $D_\vv h$ is hyperbolic with hyperbolicity cone containing $\Lambda_{++}$.  
\item The polynomial $h(\xx)-yD_\vv h(\xx) \in \RR[x_1,\ldots, x_n,y]$ is hyperbolic with hyperbolicity cone containing $\Lambda_{++} \times \{y: y \leq 0\}$. 
\item The rational function 
$$
\xx \mapsto \frac {h(\xx)}{D_\vv h(\xx)}
$$
is concave on $\Lambda_{++}$. 
\end{enumerate}
\end{theorem}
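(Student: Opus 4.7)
The plan is to prove (1)--(3) in sequence; (3) will fall out of (2) by convex analysis of the hyperbolicity cone.

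For (1), I first take $\vv\in\Lambda_{++}(\ee)$. By Theorem~\ref{hypfund}(4), $h$ is hyperbolic with respect to $\vv$ with $\Lambda_{++}(h,\vv)=\Lambda_{++}(h,\ee)$. Using \eqref{dvalt}, $D_\vv h(t\vv-\xx)=\frac{d}{dt}h(t\vv-\xx)$ is the derivative of a real-rooted polynomial in $t$, so Rolle's theorem shows that $D_\vv h$ is hyperbolic with respect to $\vv$ and that the zeros of $D_\vv h(t\vv-\xx)$ interleave those of $h(t\vv-\xx)$. If $\xx\in\Lambda_{++}(h,\vv)$ then all eigenvalues of $\xx$ w.r.t.\ $h$ are positive, forcing all eigenvalues w.r.t.\ $D_\vv h$ to be positive by interleaving, so $\xx\in\Lambda_{++}(D_\vv h,\vv)=\Lambda_{++}(D_\vv h,\ee)$ by Theorem~\ref{hypfund}(4). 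For $\vv\in\Lambda_+\setminus\Lambda_{++}$ with $D_\vv h\not\equiv 0$, I approximate by $\vv_\epsilon=\vv+\epsilon\ee\in\Lambda_{++}$ and pass to the limit $\epsilon\downarrow 0$, using that the coefficientwise limit of real-rooted polynomials with non-vanishing leading coefficient is real-rooted.

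For (2), the essential input is that the roots in $t$ of $D_\vv h(t\ee-\xx)$ interleave those of $h(t\ee-\xx)$ for every $\xx$. For $\vv=\ee$ this is precisely Rolle; for $\vv\in\Lambda_{++}$ it follows from (1) together with the coincidence of hyperbolicity cones when the direction is changed from $\vv$ to $\ee$; for $\vv\in\Lambda_+$ one passes again to the limit. The Hermite--Kakeya--Obreschkoff theorem then yields that
\begin{equation*}
 h(t\ee-\xx)+\lambda D_\vv h(t\ee-\xx)
\end{equation*}
is real-rooted in $t$ for every $\lambda\in\RR$. Computing $H(t(\ee,0)-(\xx,y))=h(t\ee-\xx)+y\,D_\vv h(t\ee-\xx)$ and noting $H(\ee,0)=h(\ee)\neq 0$, this shows $H(\xx,y):=h(\xx)-yD_\vv h(\xx)$ is hyperbolic with respect to $(\ee,0)$. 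For the cone containment, for $\xx\in\Lambda_{++}$ and $y\leq 0$ one has $H(\xx,y)=h(\xx)-yD_\vv h(\xx)>0$ (both $h(\xx)>0$ and $D_\vv h(\xx)>0$ by part (1)); the set $\Lambda_{++}\times\{y\leq 0\}$ is connected and contains $(\ee,0)$, so by Theorem~\ref{hypfund}(2) it lies in the open hyperbolicity cone of $H$.

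For (3), fix $\xx\in\Lambda_{++}$. Since $H(\xx,\cdot)$ is affine in $y$ with unique zero at $y^\ast(\xx):=h(\xx)/D_\vv h(\xx)>0$, and since $(\xx,0)$ lies in $\Lambda_{++}(H)$ by part (2), convexity of the hyperbolicity cone forces the slice at $\xx$ to be the interval $(-\infty,y^\ast(\xx))$. Hence
\begin{equation*}
 \Lambda_{++}(H)\cap(\Lambda_{++}\times\RR)=\{(\xx,y):\xx\in\Lambda_{++},\ y<y^\ast(\xx)\},
\end{equation*}
which is the strict hypograph of $y^\ast$ over $\Lambda_{++}$. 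Theorem~\ref{hypfund}(1) gives convexity of the left-hand side, which is equivalent to concavity of $y^\ast=h/D_\vv h$ on the convex domain $\Lambda_{++}$.

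The main obstacle I anticipate is rigorously extending the interleaving and hyperbolicity-cone statements from $\vv\in\Lambda_{++}$ to $\vv\in\Lambda_+$: the Rolle-based argument is direct only when the derivative is taken along the hyperbolicity direction, so for boundary $\vv$ the limiting argument requires care that leading coefficients in $t$ do not vanish (which is where the hypothesis $D_\vv h\not\equiv 0$ intervenes) so that the degrees of the relevant univariate polynomials remain stable under the limit.
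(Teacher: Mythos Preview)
Your arguments for (1) and (3) are correct and match the paper's: for (1) the paper simply cites \cite{BrOp}, and for (3) both you and the paper identify the hypograph of $h/D_\vv h$ with a slice of the hyperbolicity cone of $H$.

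For (2) you take a genuinely different route. The paper observes that $h(\xx)y$ is hyperbolic with cone $\Lambda_{++}\times\{y<0\}$, so $(\vv,-1)$ lies in its closed cone; applying part (1) \emph{to this larger polynomial} gives directly that $-D_{(\vv,-1)}\bigl(h(\xx)y\bigr)=h(\xx)-yD_\vv h(\xx)$ is hyperbolic with cone containing $\Lambda_{++}\times\{y<0\}$, and then one extends to $y=0$ by connectedness. This bootstrap avoids Hermite--Kakeya--Obreschkoff and the separate limiting argument for boundary $\vv$, since (1) already handles the boundary case. Your approach via Rolle, HKO, and direction transfer also works, but the step ``for $\vv\in\Lambda_{++}$ [interleaving in direction $\ee$] follows from (1) together with the coincidence of hyperbolicity cones'' is underspecified: part (1) as stated only gives that $D_\vv h$ is hyperbolic with cone containing $\Lambda_{++}$, which does not by itself yield interleaving with $h$ along $\ee$. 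The clean way to fill this is to note that Rolle plus HKO in direction $\vv$ gives hyperbolicity of $H$ with respect to $(\vv,0)$, then use Theorem~\ref{hypfund}(2),(4) and the path $s\mapsto\bigl((1-s)\vv+s\ee,0\bigr)$ inside $\{H\neq 0\}$ to transfer to $(\ee,0)$; this is presumably what you intend, but it should be said explicitly rather than attributed to (1).
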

\begin{proof}
(1). See \cite[Lemma 4]{BrOp}.

(2). The polynomial $h(\xx)y$ is hyperbolic with hyperbolicity cone containing $\Lambda_{++} \times \{y : y<0\}$. Hence so is $H(\xx,y):= -D_{(\vv,-1)} h(\xx)y= h(\xx)- y D_\vv h(\xx)$ by (1). Since $H(\ee',0) = h(\ee') \neq 0$ for each $\ee' \in \Lambda_{++}$, we see that also $\Lambda_{++}\times \{0\}$ is a subset of the hyperbolicity cone (by Theorem~\ref{hypfund} (2)) of $H$.

(3). If $\xx \in \Lambda_{++}$, then (by Theorem~\ref{hypfund} (2)) $(\xx,y)$ is in the closure of the hyperbolicity cone of $H(\xx,y)$ if and only if 
$$
y \leq \frac {h(\xx)}{D_\vv h(\xx)}.
$$
Since hyperbolicity cones are convex, 
$$
y_1 \leq \frac {h(\xx_1)}{D_\vv h(\xx_1)} \mbox{ and } y_2 \leq \frac {h(\xx_2)}{D_\vv h(\xx_2)} \mbox{ imply } y_1+y_2 \leq \frac {h(\xx_1+\xx_2)}{D_\vv h(\xx_1+\xx_2)}, 
$$
for all $\xx_1,\xx_2 \in \Lambda_{++}$, 
from which (3) follows. 
\end{proof}

\begin{lemma}\label{rankalt}
Let $h$ be hyperbolic with hyperbolicity cone $\Lambda_{++}\subseteq \RR^n$. The rank function does not depend on the choice of $\ee \in \Lambda_{++}$, and 
$$
\rk(\vv)= \max\{ k : D_\vv^kh \not \equiv 0\}, \quad \mbox{ for all } \vv \in \RR^n.
$$
\end{lemma}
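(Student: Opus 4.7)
I would prove both claims of the lemma simultaneously by matching two expansions of the univariate polynomial $p(s) := h(\ee + s\vv)$, for an arbitrary $\ee \in \Lambda_{++}$ and $\vv \in \RR^n$.

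First, starting from \eqref{dalambdas} with $-\vv$ in place of $\xx$ and using the sign rule \eqref{dilambdas} to write $\lambda_j(-\vv) = -\lambda_{d+1-j}(\vv)$, one arrives at $h(\vv + t\ee) = h(\ee)\prod_{j=1}^d(t + \lambda_j(\vv))$. Substituting $t = 1/s$ for $s \neq 0$, multiplying by $s^d$, and invoking homogeneity $h(s\,\cdot) = s^d h(\cdot)$, one obtains the product identity
$$
h(\ee + s\vv) = h(\ee)\prod_{j=1}^{d}\bigl(1 + s\,\lambda_j(\vv)\bigr),
$$
which extends to $s = 0$ by polynomiality. Hence the $s$-degree of $p$ equals $\#\{j : \lambda_j(\vv) \neq 0\} = \rk_\ee(\vv)$. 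On the other hand, Taylor expansion around $s = 0$ gives
$$
p(s) = \sum_{k=0}^{d}\frac{s^k}{k!}\,D_\vv^k h(\ee),
$$
so the $s$-degree also equals $\max\{k : D_\vv^k h(\ee) \neq 0\}$. Comparing,
$$
\rk_\ee(\vv) = \max\{k : D_\vv^k h(\ee) \neq 0\} \leq R := \max\{k : D_\vv^k h \not\equiv 0\},
$$
for every $\ee \in \Lambda_{++}$, since a polynomial non-vanishing at a point is a fortiori not identically zero. As $R$ does not depend on $\ee$, both claims reduce to the reverse inequality $D_\vv^R h(\ee) \neq 0$ for every $\ee \in \Lambda_{++}$.

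If $\vv \in \Lambda_+$, this reverse inequality drops out by iterating Theorem~\ref{direct}(1) in the direction $\vv$: each $D_\vv^k h$ with $k \leq R$ is hyperbolic with hyperbolicity cone containing $\Lambda_{++}$, hence is non-vanishing there. The main obstacle is handling $\vv \in \RR^n$ lying outside $\Lambda_+ \cup (-\Lambda_+)$, where $D_\vv h$ need not be hyperbolic at all --- for instance $D_\vv h = x_2 - x_1$ when $\vv = (1,-1)$ and $h = x_1 x_2$, which vanishes on $\{x_1 = x_2\}\cap \Lambda_{++}$. To treat such $\vv$ I would invoke the polarisation identity
$$
D_\vv^R h(\ee) = \frac{R!}{(d-R)!}\,D_\ee^{d-R} h(\vv),
$$
arising from the symmetric $d$-linear form attached to $h$, together with Theorem~\ref{direct}(1) iterated in the direction $\ee \in \Lambda_{++}$: this shows that the polynomial $\xx \mapsto D_\ee^{d-R}h(\xx)$ is hyperbolic with hyperbolicity cone containing $\Lambda_{++}$. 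Propagating the resulting non-vanishing from $\Lambda_{++}$ to the given $\vv$ is the technical crux; I would close the argument by exploiting that $\rk_\ee(\vv)$ is integer-valued and lower semi-continuous in $\ee$ --- so $\{\ee \in \Lambda_{++} : \rk_\ee(\vv) = R\}$ is open and non-empty --- together with connectedness of $\Lambda_{++}$ and a perturbation analysis based on the real-rootedness of $p(s)$ to rule out any drop of rank within $\Lambda_{++}$.
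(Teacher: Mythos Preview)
Your computational core---expanding $h(\ee + s\vv)$ both as a product over eigenvalues and as a Taylor series to obtain $\rk_\ee(\vv) = \max\{k : D_\vv^k h(\ee) \neq 0\}$---is exactly what the paper does. The divergence is in how rank-independence is handled. The paper does \emph{not} prove it; it cites \cite{Ren,BrObs}. Once that is granted, the formula $\rk(\vv) = \max\{k : D_\vv^k h \not\equiv 0\}$ falls out in one line: if $D_\vv^{k+1} h(\ee) = 0$ for one $\ee \in \Lambda_{++}$, rank-independence forces $D_\vv^{k+1} h(\ee') = 0$ for every $\ee' \in \Lambda_{++}$, and since $\Lambda_{++}$ is open this gives $D_\vv^{k+1} h \equiv 0$.

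Your attempt to prove rank-independence from scratch is fine for $\vv \in \Lambda_+$, but has a genuine gap for general $\vv$. The polarisation identity $D_\vv^R h(\ee) = \frac{R!}{(d-R)!} D_\ee^{d-R} h(\vv)$ is correct but does not advance the argument: yes, $g := D_\ee^{d-R}h$ is hyperbolic and non-vanishing on $\Lambda_{++}$, but you need $g(\vv) \neq 0$ at a point outside the cone. In fact $g(\vv) = (d-R)!\,h(\ee)\,e_R(\lambda_1(\vv),\ldots,\lambda_d(\vv))$, which vanishes precisely when fewer than $R$ eigenvalues of $\vv$ (with respect to $\ee$) are nonzero---i.e., when $\rk_\ee(\vv) < R$. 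So the identity only restates the problem. Your fallback---open, non-empty, connectedness plus a ``perturbation analysis based on real-rootedness''---is where all the content would have to live, and you have not supplied it. Concretely, you need closedness of $\{\ee \in \Lambda_{++} : D_\vv^R h(\ee) \neq 0\}$ in $\Lambda_{++}$, and real-rootedness of $s\mapsto h(\ee+s\vv)$ by itself does not preclude a root escaping to infinity (equivalently, an eigenvalue of $\vv$ collapsing to zero) as $\ee$ varies. The standard argument (as in \cite{Ren}) shows that the multiplicity of $t=0$ in $h(\vv + t\ee)$ equals the intrinsic order of vanishing of $h$ at $\vv$, which is manifestly $\ee$-independent; this uses hyperbolicity in an essential way and is not a routine perturbation. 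Either supply that argument in full, or follow the paper and cite it.
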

\begin{proof}
That the rank does not depend on the choice of $\ee \in \Lambda_{++}$ is known, see \cite[Prop. 22]{Ren} or \cite[Lemma 4.4]{BrObs}.

By \eqref{dvalt}
\begin{equation}\label{mag}
h(\xx-y\vv) = \left( \sum_{k=0}^{\infty} \frac {(-y)^k D_\vv^k}{k!} \right) h(\xx). 
\end{equation}
 Thus  
$$
h(\ee-t\vv) = h(\ee)\prod_{j=1}^d(1-t\lambda_j(\vv))= \sum_{k=0}^d (-1)^k\frac {D^k_\vv h(\ee)} {k!} t^k, 
$$
 and hence $\rk(\vv)= \deg h(\ee-t\vv)=  \max\{k : D^k_\vv h(\ee)\neq 0\}$. Since the rank does not depend on the choice of $\ee \in \Lambda_{++}$, if $D^{k+1}_\vv h(\ee)=D^{k+2}_\vv h(\ee)=\cdots =0$ for some 
$\ee \in \Lambda_{++}$, then $D^{k+1}_\vv h(\ee')=D^{k+2}_\vv h(\ee')=\cdots =0$  for all $\ee' \in \Lambda_{++}$. Since $ \Lambda_{++}$ has non-empty interior this means $D^{k+1}_\vv h \equiv 0$.
\end{proof}

If $h(\xx) \in \RR[x_1,\ldots, x_n]$ and $\vv_1, \ldots, \vv_m \in \RR^n$, let $h[\vv_1, \ldots, \vv_m]$ be the polynomial in $\RR[x_1,\ldots, x_n,y_1,\ldots, y_m]$ defined by 
$$
h[\vv_1, \ldots, \vv_m] = \prod_{j=1}^m \left(1-y_jD_{\vv_j}\right) h(\xx).
$$
We call $h[\vv_1, \ldots, \vv_m]$ a \emph{mixed hyperbolic polynomial}. 
By iterating Theorem~\ref{direct} (2) we get: 
\begin{theorem}\label{mixhyp}
If $h(\xx)$ is hyperbolic with hyperbolicity cone $\Lambda_{++}$ and $\vv_1, \ldots, \vv_m \in \Lambda_+$, then $h[\vv_1, \ldots, \vv_m]$ is hyperbolic with hyperbolicity cone containing $\Lambda_{++} \times (-\RR_+^m)$, where $\RR_+ := [0,\infty)$. 
\end{theorem}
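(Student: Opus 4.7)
The plan is to prove Theorem~\ref{mixhyp} by induction on $m$, with the inductive step being one application of Theorem~\ref{direct}(2). The base case $m=0$ is the hypothesis that $h$ is hyperbolic with hyperbolicity cone $\Lambda_{++}$.

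For the inductive step, write $g(\xx, y_1,\ldots, y_{m-1}) := h[\vv_1,\ldots,\vv_{m-1}]$, and set $\tilde\vv_m := (\vv_m, 0, \ldots, 0) \in \RR^{n+m-1}$, so that $D_{\tilde\vv_m} g = D_{\vv_m} g$ (the extra components contribute nothing) and
$$
h[\vv_1,\ldots,\vv_m] \;=\; g - y_m D_{\tilde\vv_m} g.
$$
By the inductive hypothesis, $g$ is hyperbolic on $\RR^{n+m-1}$ with hyperbolicity cone $\Lambda_{++}(g) \supseteq \Lambda_{++} \times (-\RR_{+}^{m-1})$. Before invoking Theorem~\ref{direct}(2), I would check that $\tilde\vv_m \in \Lambda_+(g) = \overline{\Lambda_{++}(g)}$: pick any $\vv_m^{(k)} \in \Lambda_{++}$ with $\vv_m^{(k)} \to \vv_m$ (possible since $\vv_m \in \Lambda_+$); then the vector $(\vv_m^{(k)}, -1/k,\, -1/k, \ldots, -1/k)$ lies in $\Lambda_{++} \times (-\RR_{++}^{m-1}) \subseteq \Lambda_{++}(g)$ and converges to $\tilde\vv_m$. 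Assuming $D_{\vv_m} g \not\equiv 0$, Theorem~\ref{direct}(2) applied to $g$ with direction $\tilde\vv_m$ then gives hyperbolicity of $h[\vv_1,\ldots,\vv_m]$ in the $n+m$ variables $(\xx, y_1, \ldots, y_m)$ with hyperbolicity cone containing $\Lambda_{++}(g) \times (-\RR_+)$, which in turn contains $\Lambda_{++} \times (-\RR_+^{m})$.

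The one loose end is the degenerate case $D_{\vv_m} g \equiv 0$, in which $h[\vv_1,\ldots,\vv_m] = g$; as a polynomial in all $n+m$ variables it is still homogeneous of degree $\deg h$ and hyperbolic with respect to any $(\ee', y'_1, \ldots, y'_{m-1}, y'_m)$ with $(\ee', y'_1, \ldots, y'_{m-1}) \in \Lambda_{++}(g)$, its hyperbolicity cone containing $\Lambda_{++}(g) \times \RR \supseteq \Lambda_{++} \times (-\RR_+^{m})$. The main obstacle is not truly an obstacle but a bookkeeping point: at each iteration one must verify that the direction fed into Theorem~\ref{direct}(2) lies in the closed hyperbolicity cone of the current polynomial, which the limit argument above delivers by perturbing the $y$-coordinates slightly into the negative orthant.
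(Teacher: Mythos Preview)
Your proposal is correct and follows exactly the approach the paper takes: the paper's entire proof is the sentence ``By iterating Theorem~\ref{direct}~(2) we get:''. Your write-up simply fills in the bookkeeping the paper omits---checking that $\tilde\vv_m$ lies in the closed hyperbolicity cone of the intermediate polynomial $g$, and handling the degenerate case $D_{\vv_m} g \equiv 0$---both of which are routine and handled correctly.
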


\begin{lemma}\label{rk1le}
Suppose $h$ is hyperbolic. If $\vv_1, \ldots, \vv_m \in \Lambda_+$ have rank at most one, then 
$$
h[\vv_1, \ldots, \vv_m] = h(\xx-y_1\vv_1 - \cdots - y_m \vv_m).
$$
\end{lemma}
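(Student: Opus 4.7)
The plan is to expand both sides as formal differential-operator series acting on $h$, and then use that rank $\leq 1$ kills all second and higher order directional derivatives.

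First I would expand the left-hand side. Since the $D_{\vv_j}$ are constant-coefficient differential operators they pairwise commute, so expanding the product gives
\begin{equation*}
h[\vv_1,\ldots,\vv_m] \;=\; \prod_{j=1}^{m}(1-y_jD_{\vv_j})\, h(\xx) \;=\; \sum_{S\subseteq [m]}(-1)^{|S|}\Bigl(\prod_{j\in S}y_j\Bigr)\Bigl(\prod_{j\in S}D_{\vv_j}\Bigr)h(\xx).
\end{equation*}

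Next I would expand the right-hand side using the Taylor-type identity \eqref{mag} applied to the vector $\yy := y_1\vv_1+\cdots+y_m\vv_m$:
\begin{equation*}
h(\xx-\yy) \;=\; \sum_{k\geq 0}\frac{(-1)^k}{k!}\,D_{\yy}^{\,k}h(\xx).
\end{equation*}
Because $D_{\yy}=\sum_j y_j D_{\vv_j}$ and the operators $D_{\vv_j}$ commute, the multinomial theorem gives
\begin{equation*}
D_{\yy}^{\,k} \;=\; \sum_{k_1+\cdots+k_m=k}\binom{k}{k_1,\ldots,k_m}y_1^{k_1}\cdots y_m^{k_m}D_{\vv_1}^{k_1}\cdots D_{\vv_m}^{k_m},
\end{equation*}
so after collecting factorials,
\begin{equation*}
h(\xx-\yy) \;=\; \sum_{(k_1,\ldots,k_m)\in \NN^m}\frac{(-1)^{k_1+\cdots+k_m}}{k_1!\cdots k_m!}\,y_1^{k_1}\cdots y_m^{k_m}\, D_{\vv_1}^{k_1}\cdots D_{\vv_m}^{k_m}h(\xx).
\end{equation*}

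The key input is Lemma~\ref{rankalt}: since $\rk(\vv_j)\leq 1$, we have $D_{\vv_j}^{\,2}h\equiv 0$, so also $D_{\vv_j}^{\,k_j}h\equiv 0$ whenever $k_j\geq 2$. Because all partial derivatives of the zero polynomial vanish, every multi-index term with some $k_j\geq 2$ contributes zero regardless of the order in which the $D_{\vv_i}$ are applied. Therefore only multi-indices $(k_1,\ldots,k_m)\in\{0,1\}^m$ survive, and the surviving sum over subsets $S=\{j:k_j=1\}$ matches the expansion of the left-hand side term by term, completing the proof.

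The only mildly subtle point (and the closest thing to an obstacle) is confirming that once $D_{\vv_j}^{\,2}h\equiv 0$, applying any further $D_{\vv_i}$ operators before or after still yields zero; this is immediate since any differential operator annihilates the zero polynomial, so no careful commutation argument is actually needed.
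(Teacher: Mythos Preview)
Your proof is correct and rests on the same key ingredient as the paper's: Lemma~\ref{rankalt} gives $D_{\vv_j}^2 h\equiv 0$, which truncates the Taylor expansion \eqref{mag}. The only difference is organizational: the paper observes the single-vector identity $(1-yD_\vv)h(\xx)=h(\xx-y\vv)$ for $\rk(\vv)\leq 1$ and then iterates over $j=1,\ldots,m$, whereas you expand all $m$ variables at once via the multinomial theorem and match terms; both routes are short and equivalent.
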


\begin{proof}
If $\vv$ has rank at most one, then $D_\vv^k h \equiv 0$ for all $k \geq 2$ by Lemma~\ref{rankalt}. Hence, by \eqref{mag}, 
$$
h(\xx-y\vv) = \left( \sum_{k=0}^{\infty} \frac {(-y)^k D_\vv^k}{k!} \right) h(\xx)= (1-yD_{\vv})h(\xx),
$$
from which the lemma follows.
\end{proof}

Note that $(\vv_1,\ldots,\vv_m) \mapsto h[\vv_1,\ldots,\vv_m]$ is affine linear in each coordinate, i.e., for all $p \in \RR$ and $1\leq i \leq m$:
\begin{align*}
& h[\vv_1,\ldots,(1-p)\vv_i+p\vv_i',\ldots, \vv_m] \\
= &(1-p)h[\vv_1,\ldots,\vv_i,\ldots, \vv_m] +ph[\vv_1,\ldots,\vv_i',\ldots, \vv_m].
\end{align*}
Hence if $\XR_1, \ldots, \XR_m$ are  independent random variables in $\RR^n$, then 
\begin{equation}\label{mixedexp}
\EE h[\XR_1,\ldots,\XR_m] = h[\EE \XR_1,\ldots,\EE \XR_m]. 
\end{equation}

The next theorem provides examples of compatible families of polynomials. 

\begin{theorem}\label{mixedchar}
Let $h(\xx)$ be hyperbolic with respect to $\ee\in \RR^n$, let $V_1, \ldots, V_m$ be finite sets of vectors  in $\Lambda_+$, and let $\ww \in \RR^{n+m}$.  For $\VV =(\vv_1,\ldots, \vv_m) \in  V_1\times \cdots \times V_m$, let 
$$f(\VV;t) := h[\vv_1,\ldots, \vv_m](t\ee +\ww).
$$ 
Then $\{f(\VV;t)\}_{\VV \in V_1\times \cdots \times V_m}$ is a compatible family.

In particular if in addition all vectors in $V_1 \cup \cdots \cup V_m$ have rank at most one, and 
$$
g(\VV;t) := h(t\ee +\ww- \alpha_1\vv_1-\cdots-\alpha_m\vv_m),
$$
where $\ww \in \RR^n$  and $(\alpha_1,\ldots, \alpha_m)\in \RR^m$, then $\{g(\VV;t)\}_{\VV \in V_1\times \cdots \times V_m}$ is a compatible family.
\end{theorem}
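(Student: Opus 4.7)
The plan is to verify the two defining conditions of a compatible family: (i) all nonzero members have the same degree and the same sign of leading coefficient; (ii) every expectation $\EE f(\XR_1,\ldots,\XR_m;t)$ over independent $\XR_i \in V_i$ is real-rooted. Throughout I will lift $\ee \in \RR^n$ to $\widetilde{\ee} := (\ee, 0) \in \RR^{n+m}$ so that the expression $t\ee + \ww$ literally makes sense.

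For (i), each operator $y_j D_{\vv_j}$ lowers the $\xx$-degree by one while raising the $y_j$-degree by one, so it preserves total degree. Hence $h[\vv_1,\ldots,\vv_m] \in \RR[\xx, y_1,\ldots,y_m]$ is homogeneous of total degree $d := \deg h$. Using homogeneity,
\[
f(\VV;t) = h[\vv_1,\ldots,\vv_m](t\widetilde{\ee} + \ww) = t^d \, h[\vv_1,\ldots,\vv_m](\widetilde{\ee}) + O(t^{d-1}) = h(\ee)\, t^d + O(t^{d-1}),
\]
since at $\widetilde{\ee}$ every $y_j$-slot vanishes and only the empty-subset term in the expansion of $\prod_j (1 - y_j D_{\vv_j})h(\xx)$ survives. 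So each $f(\VV;t)$ has degree exactly $d$ with leading coefficient $h(\ee)\neq 0$, independent of $\VV$.

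For (ii), the multilinearity relation \eqref{mixedexp} rewrites
\[
\EE f(\XR_1,\ldots,\XR_m;t) = h[\uu_1,\ldots,\uu_m](t\widetilde{\ee} + \ww), \qquad \uu_i := \EE \XR_i.
\]
Convexity of $\Lambda_+$ (Theorem~\ref{hypfund}(1)) forces $\uu_i \in \Lambda_+$, and then Theorem~\ref{mixhyp} asserts that $H := h[\uu_1,\ldots,\uu_m]$ is hyperbolic with hyperbolicity cone containing $\Lambda_{++} \times (-\RR_+^m)$. Since $\widetilde{\ee} \in \Lambda_{++} \times \{0\}$ sits inside this set, Theorem~\ref{hypfund}(4) lets me take $\widetilde{\ee}$ itself as a hyperbolicity direction for $H$, so $t \mapsto H(t\widetilde{\ee} + \ww)$ is real-rooted by the very definition of hyperbolicity applied with $\xx := -\ww$. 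This establishes compatibility.

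The "in particular" clause then follows immediately: when each $\vv_j$ has rank at most one, Lemma~\ref{rk1le} gives $h[\vv_1,\ldots,\vv_m](\xx, y_1,\ldots,y_m) = h(\xx - y_1\vv_1 - \cdots - y_m\vv_m)$, so packaging $\widetilde{\ww} := (\ww, \alpha_1, \ldots, \alpha_m) \in \RR^{n+m}$ into the first part produces $f(\VV;t) = g(\VV;t)$ identically. I do not expect any serious obstacle, since all the real content is already bottled up in Theorem~\ref{mixhyp}, Lemma~\ref{rk1le}, and \eqref{mixedexp}; the only point that needs a moment's care is confirming that the lifted direction $\widetilde{\ee}$ lies in the \emph{open} hyperbolicity cone of the mixed polynomial rather than merely on its topological boundary, which is immediate from Theorem~\ref{mixhyp}.
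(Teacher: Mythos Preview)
Your proposal is correct and follows essentially the same route as the paper: use multilinearity \eqref{mixedexp} to pass the expectation inside, invoke convexity of $\Lambda_+$ to ensure $\EE\XR_i \in \Lambda_+$, then apply Theorem~\ref{mixhyp} to conclude that $h[\EE\XR_1,\ldots,\EE\XR_m]$ is hyperbolic with respect to $(\ee,0,\ldots,0)$, whence real-rootedness of the specialization; the rank-one clause reduces to the first via Lemma~\ref{rk1le}. The one addition you make is the explicit verification of condition~(i) (common degree and leading coefficient $h(\ee)$), which the paper's proof silently omits, so your write-up is in fact slightly more complete.
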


\begin{proof}
Let $\XR_1 \in V_1, \ldots, \XR_m \in V_m$ be independent random variables. Then the polynomial 
$\EE h[\XR_1, \ldots, \XR_m]= h[\EE \XR_1,\ldots,\EE \XR_m]$ is hyperbolic with respect to $(\ee, 0,\ldots,0)$ by Theorem~\ref{mixhyp} (since $\EE \vv_i \in \Lambda_+$ for all $i$ by convexity). In particular the polynomial $\EE f(\XR_1,\ldots, \XR_m;t)$ is real-rooted.

 The second assertion is an immediate consequence of the first combined with Lemma~\ref{rk1le}.

\end{proof}

\section{Correlation inequalities for hyperbolic polynomials}\label{scorr}
In this (technical) section we will derive inequalities for hyperbolic polynomials needed to prove the bound in Theorem~\ref{t1}. 

A consequence of Theorem~\ref{direct} (3) is the correlation inequality 
\begin{equation}\label{corn}
D_\uu h(\xx) \cdot D_\vv h(\xx) - D_\uu D_\vv h(\xx) \cdot h(\xx) \geq 0,
\end{equation}
for all hyperbolic polynomials $h$ and $\uu,\vv, \xx \in \Lambda_+$, see e.g. \cite[Section 3]{BBL}. Indeed 
$$
D_\uu h(\xx) \cdot D_\vv h(\xx) - D_\uu D_\vv h(\xx) \cdot h(\xx)=
(D_\uu h)(\xx)^2\cdot D_\vv \left(\frac h {D_\uu h}\right)(\xx) \geq 0, 
$$
by concavity (Theorem~\ref{direct}).

We also have the higher correlation inequalities   
\begin{equation}\label{highcorn}
D^k_\uu h(\xx) \cdot D_\vv h(\xx) - D^k_\uu D_\vv h(\xx) \cdot h(\xx)\geq 0
\end{equation}
for all $\uu,\vv, \xx \in \Lambda_+$ and $k \geq 0$. Indeed 
\begin{align*}
D^k_\uu h(\xx) \cdot D_\vv h(\xx) &- D^k_\uu D_\vv h(\xx) \cdot h(\xx) = -h(\xx)^2\cdot D_\vv \left(\frac {D^k_\uu h} h\right)(\xx) \\
&= 
-h(\xx)^2\cdot D_\vv \left(\frac {D^{k}_\uu h} {D^{k-1}_\uu h} \frac {D^{k-1}_\uu h} {D^{k-2}_\uu h}\cdots 
\frac {D_\uu h} h\right)(\xx) \geq 0, 
\end{align*}
by Leibniz' rule and \eqref{corn} for the hyperbolic polynomials $g=D_\uu^jh$.  We want to relate the quantities in the left hand side of \eqref{highcorn} for different $k$. For the rest of this section we fix a hyperbolic  polynomial $h$, and vectors $\uu,\vv \in \Lambda_+$ and $\xx \in \Lambda_{++}$. 
To enhance readability in the computations to come, let 
\begin{align*}
\Phi_k & = -D_\vv \left(\frac {D_\uu^k h} h\right)(\xx), \ \ \mbox{ and }\\
\eta_k &= \frac {D_\uu^k h} h(\xx),
\end{align*}
for all $k \geq 0$. Note that $\eta_k >0$ for $0\leq k \leq \rk(\uu)$, and $\eta_k = 0$ for $k>\rk(\uu)$. 
\begin{lemma}\label{t-rec}
For $1\leq k \leq \rk(\uu)+1$, 
$$
\Phi_{k+1} \leq 2 \eta_k\cdot \eta_{k-1}^{-1}\cdot\Phi_k+ 
(-2\eta_k^2\cdot\eta_{k-1}^{-2}+\eta_{k+1}\cdot\eta_{k-1}^{-1})\cdot\Phi_{k-1}.
$$
\end{lemma}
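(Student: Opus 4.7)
The plan is to reduce the three-term inequality to a single convexity statement for $\log g_{k-1}$, where $g_j := D_\uu^j h$. Set $q_j := g_j/g_{j-1} = \eta_j/\eta_{j-1}$ and $\phi_j := -D_\vv(g_j/g_{j-1}) = -D_\vv q_j$; both are well-defined on $\Lambda_{++}$ for $j \leq \rk(\uu)$. Leibniz' rule applied to the factorization $g_k/g_0 = q_k \cdot (g_{k-1}/g_0)$ produces the basic identity
\[
\Phi_k = q_k \Phi_{k-1} + \eta_{k-1} \phi_k.
\]
Substituting this identity into the desired inequality at both indices $k$ and $k+1$ and simplifying (the factor $2\eta_k^2 - \eta_{k+1}\eta_{k-1}$ cancels cleanly after using $q_k \eta_{k-1}/\eta_k = 1$), the claim becomes equivalent to
\[
\eta_k^2 \phi_{k+1} \leq (2\eta_k^2 - \eta_{k+1}\eta_{k-1}) \phi_k.
\]

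For $k = \rk(\uu)+1$ both sides vanish and there is nothing to prove, so assume $1 \leq k \leq \rk(\uu)$, in which case $\eta_k > 0$. Dividing by $\eta_k^2$ and using $\eta_{k+1}\eta_{k-1}/\eta_k^2 = q_{k+1}/q_k$, the inequality becomes $q_k \phi_{k+1} + q_{k+1}\phi_k \leq 2 q_k \phi_k$, whose left-hand side equals $\psi_k := -D_\vv(q_k q_{k+1}) = -D_\vv(g_{k+1}/g_{k-1})$ by Leibniz. Using $g_{k+1} = D_\uu^2 g_{k-1}$ together with the elementary identity $D_\uu^2 g_{k-1}/g_{k-1} = D_\uu^2 \log g_{k-1} + (D_\uu \log g_{k-1})^2 = D_\uu^2 \log g_{k-1} + q_k^2$, applying $-D_\vv$ yields
\[
\psi_k - 2 q_k \phi_k = -D_\vv D_\uu^2 \log g_{k-1}.
\]
Hence the lemma reduces to proving
\[
D_\vv D_\uu^2 \log g_{k-1}(\xx) \geq 0, \qquad \xx \in \Lambda_{++}.
\]

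To conclude, commuting partial derivatives gives $D_\vv D_\uu^2 \log g_{k-1} = D_\uu^2(D_\vv \log g_{k-1})$, so it suffices to show that the function $D_\vv \log g_{k-1} = D_\vv g_{k-1}/g_{k-1}$ is convex on $\Lambda_{++}$. The polynomial $g_{k-1}$ is hyperbolic with hyperbolicity cone containing $\Lambda_{++}$ by Theorem~\ref{direct}(1), and by Theorem~\ref{direct}(3) applied to $g_{k-1}$, the ratio $g_{k-1}/D_\vv g_{k-1}$ is a positive concave function on $\Lambda_{++}$; its reciprocal is therefore convex. The main obstacle is the initial bookkeeping: the three-term form of the inequality is algebraically opaque, and the crux is recognizing that in the simpler variables $\phi_j$ and $q_j$ the combination $\psi_k - 2 q_k \phi_k$ collapses precisely to $-D_\vv D_\uu^2 \log g_{k-1}$, making the entire statement a direct consequence of the concavity supplied by Theorem~\ref{direct}(3).
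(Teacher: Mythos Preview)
Your proof is correct and follows essentially the same approach as the paper: both arguments factor $D_\uu^k h/h$ as $(D_\uu^k h/D_\uu^{k-1}h)\cdot(D_\uu^{k-1}h/h)$ via Leibniz and reduce the inequality to the nonnegativity of $D_\uu^2\bigl(D_\vv g_{k-1}/g_{k-1}\bigr)(\xx)$, which is exactly the convexity of $D_\vv g/g$ supplied by Theorem~\ref{direct}(3). The only difference is packaging: the paper equates two expansions of $D_\vv D_\uu(D_\uu^k h/h)$ and drops the nonnegative term $D_\vv D_\uu(q_k)\cdot\eta_{k-1}$ directly, whereas you substitute the Leibniz identity into the claimed inequality and use the identity $g_{k+1}/g_{k-1}=D_\uu^2\log g_{k-1}+q_k^2$ to make the same term appear; your logarithmic bookkeeping makes the commutation $D_\vv D_\uu^2\log g_{k-1}=D_\uu^2 D_\vv\log g_{k-1}$ transparent, but the underlying content is identical.
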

\begin{proof}
First note that 
\begin{align}\label{forsta}
D_\vv D_\uu \left(\frac {D_\uu^k h}{h} \right) &= D_\vv\left(\frac {D_\uu^{k+1} h}{h} - 
\frac {D_\uu^{k} h}{h}\cdot \frac {D_\uu h}{h}\right) \nonumber \\
&= -\Phi_{k+1}+ \eta_{1} \cdot \Phi_{k}+ \eta_{k}\cdot \Phi_{1}.
\end{align}
Also, by Leibniz' rule
\begin{align}\label{andra}
\Phi_k &= -D_\vv \left(\frac {D_\uu^k h} {D_\uu^{k-1}h} \cdot \frac {D_\uu^{k-1} h} {h}\right) \nonumber \\
&= -D_\vv \left(\frac {D_\uu^k h} {D_\uu^{k-1}h}\right) \cdot  \frac {D_\uu^{k-1} h} {h}- \frac {D_\uu^k h} {D_\uu^{k-1}h} \cdot D_\vv \left( \frac {D_\uu^{k-1} h} {h}\right) \nonumber \\
&= -D_\vv \left(\frac {D_\uu^k h} {D_\uu^{k-1}h}\right) \cdot  \eta_{k-1}+ \eta_k\cdot \eta_{k-1}^{-1} \cdot \Phi_{k-1}.
\end{align}
By Leibniz' rule again

\begin{align*}
D_\vv D_\uu \left(\frac {D_\uu^k h}{h} \right) &=  D_\vv D_\uu \left(\frac {D_\uu^k h}{D_\uu^{k-1}h} \cdot \frac {D_\uu^{k-1} h}{h} \right)\\
&= D_\vv D_\uu\left(\frac {D_\uu^k h}{D_\uu^{k-1}h}\right)\cdot \frac {D_\uu^{k-1} h}{h} + D_\vv\left(\frac {D_\uu^k h}{D_\uu^{k-1}h}\right)\cdot D_\uu\left(\frac {D_\uu^{k-1} h}{h}\right)+\\
&+D_\uu\left(\frac {D_\uu^k h}{D_\uu^{k-1}h}\right)\cdot D_\vv\left(\frac {D_\uu^{k-1} h}{h}\right)+\frac {D_\uu^k h}{D_\uu^{k-1}h}\cdot D_\vv  D_\uu\left(\frac {D_\uu^{k-1} h}{h}\right)
\end{align*}
We claim that the term 
\begin{equation}\label{daterm}
D_\vv D_\uu\left(\frac {D_\uu^k h}{D_\uu^{k-1}h}\right)\cdot \frac {D_\uu^{k-1} h}{h} 
\end{equation}
is nonnegative. The second factor is nonnegative. If $g=D_\uu^{k-1}h$ we see that the first factor equals 
$
D_\uu^2 \left({D_\vv g}/{g}\right)
$, 
which is nonnegative since ${D_\vv g}/{g}$ is convex on $\Lambda_{++}$ (Theorem~\ref{direct}).

Using \eqref{forsta}, \eqref{andra}, and the nonnegativity of \eqref{daterm} we get 
\begin{align*}
&-\Phi_{k+1}+ \eta_{1} \cdot \Phi_{k}+ \eta_{k}\cdot \Phi_{1} \\
\geq &\left(-\eta_{k-1}^{-1}\cdot\Phi_k+\eta_k\cdot \eta_{k-1}^{-2}\cdot\Phi_{k-1}\right)\cdot (\eta_k-\eta_{k-1}\cdot \eta_1)-(\eta_{k+1}\cdot \eta_{k-1} -\eta_k\cdot \eta_k)\cdot \eta_{k-1}^{-2}\cdot\Phi_{k-1} \\
+ &\eta_k\cdot \eta_{k-1}^{-1}\cdot \left(-\Phi_{k}+ \eta_{1} \cdot \Phi_{k-1}+ \eta_{k-1}\cdot \Phi_{1} \right),
\end{align*}
which simplifies to the desired inequality. 
\end{proof}
If $p(t) = \sum_{k=0}^r\binom r k a_k t^k$ is a real-rooted polynomial, then 
\emph{Newton's inequalities} \cite{Newton} say that 
$$
a_k^2 \geq a_{k-1}\cdot a_{k+1},
$$
for all $1\leq k \leq r-1$. If additionally $a_0,a_1,\ldots, a_k >  0$, then 
\begin{equation}\label{ncons}
\frac {a_k} {a_{k-1}} \leq \frac {a_{k-1}} {a_{k-2}} \leq \cdots \leq \frac {a_{1}} {a_{0}}.
\end{equation}
and thus 
\begin{equation}\label{ajbound}
a_j \leq a_0 \left( \frac {a_1} {a_0}\right)^j, \ \ \ \mbox{ for all } 0\leq j \leq k.
\end{equation}

\begin{lemma}\label{post}
Suppose $\rk(\uu)=r$. If $\Phi_1=0$, then $\Phi_k=0$ for all $1\leq k \leq r$. If $\Phi_1>0$, then $\Phi_k >0$ for all $1\leq k \leq r$. 
\end{lemma}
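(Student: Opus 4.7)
The plan is to combine three ingredients: the nonnegativity $\Phi_k \geq 0$ (a consequence of the correlation inequality \eqref{highcorn} together with $h(\xx) > 0$ on $\Lambda_{++}$), the recursion in Lemma~\ref{t-rec}, and Newton's inequalities applied to the real-rooted polynomial $p(t) := h(\xx - t\uu)$. First I would observe that, since $D_\uu^k h \equiv 0$ for $k > r$ by Lemma~\ref{rankalt}, one has $p(t)/h(\xx) = \sum_{k=0}^r \frac{(-t)^k}{k!}\eta_k$, a polynomial of degree exactly $r$ with $\eta_k > 0$ for $0 \leq k \leq r$; real-rootedness follows from the hyperbolicity of $h$ with respect to $\xx \in \Lambda_{++}$ (Theorem~\ref{hypfund}(4)). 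Rewriting $p(t)/h(\xx)$ in the binomial basis as $\sum \binom{r}{k} c_k (-t)^k$ with $c_k = \eta_k(r-k)!/r!$ and invoking Newton's inequalities $c_k^2 \geq c_{k-1}c_{k+1}$, a short calculation will show that the coefficient $B_k := -2\eta_k^2/\eta_{k-1}^2 + \eta_{k+1}/\eta_{k-1}$ appearing in Lemma~\ref{t-rec} is strictly negative for every $1 \leq k \leq r$ (the case $k = r$ being immediate from $\eta_{r+1}=0$).

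With this in hand the key step is the claim that $\Phi_k = 0$ for some $1 \leq k \leq r$ forces both $\Phi_{k-1} = 0$ and $\Phi_{k+1} = 0$. Rearranging Lemma~\ref{t-rec} as
$$
\Phi_{k+1} + |B_k|\,\Phi_{k-1} \leq A_k\,\Phi_k,
$$
with $A_k = 2\eta_k/\eta_{k-1} \geq 0$ and $|B_k| > 0$, the vanishing of $\Phi_k$ forces $\Phi_{k-1}$ and $\Phi_{k+1}$ to vanish by nonnegativity. The first assertion of the lemma then follows by upward propagation from $\Phi_0 = \Phi_1 = 0$: Lemma~\ref{t-rec} at $k = 1$ gives $\Phi_2 \leq 0$, hence $\Phi_2 = 0$, and continuing inductively yields $\Phi_k = 0$ for all $1 \leq k \leq r$. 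Note that this direction uses only the upper bound in Lemma~\ref{t-rec} and does not need the sign of $B_k$.

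For the second assertion I would argue by contradiction: assuming $\Phi_1 > 0$, suppose $\Phi_j = 0$ for the smallest $2 \leq j \leq r$; the key claim at index $j$ gives $\Phi_{j-1} = 0$, contradicting either the minimality of $j$ (if $j \geq 3$) or the hypothesis $\Phi_1 > 0$ (if $j = 2$). The main obstacle I anticipate is pinning down the correct normalization of Newton's inequalities so that $B_k < 0$ can be extracted uniformly on the full range $1 \leq k \leq r$, including at the boundary $k = r$ where Newton's inequality itself no longer directly applies; once that sign is verified, the remainder is a compact propagation argument driven by the nonnegativity of each $\Phi_k$.
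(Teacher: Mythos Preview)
Your proposal is correct and follows essentially the same approach as the paper. Both arguments use Lemma~\ref{t-rec}, the nonnegativity $\Phi_k\ge 0$ from \eqref{highcorn}, and Newton's inequalities for the real-rooted polynomial $s\mapsto h(\xx+s\uu)/h(\xx)$ to see that the coefficient $B_k=(-2+\eta_{k+1}\eta_{k-1}/\eta_k^2)(\eta_k/\eta_{k-1})^2$ is strictly negative; the only cosmetic difference is that for the second assertion you rearrange the recursion as $\Phi_{k+1}+|B_k|\Phi_{k-1}\le A_k\Phi_k$ and conclude $\Phi_{k-1}=0$, while the paper keeps the original form and derives $\Phi_{k+1}<0$, both yielding the same contradiction. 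Your anticipated obstacle is not a real one: at $k=r$ one has $\eta_{r+1}=0$ so $B_r=-2(\eta_r/\eta_{r-1})^2<0$ directly, without Newton.
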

\begin{proof}
If $\Phi_1=0$, then Lemma~\ref{t-rec} implies $\Phi_k =0$ for all $1\leq k \leq r$. 

By Lemma~\ref{t-rec}, 
\begin{equation}\label{plog}
\Phi_{k+1} \leq 2 t \Phi_{k} +\left(-2+\frac {\eta_{k+1}\cdot \eta_{k-1}} {\eta_k^2} \right) \cdot t^2 \cdot \Phi_{k-1}, 
\end{equation}
where $t=\eta_k\cdot \eta_{k-1}^{-1}\cdot \eta_1^{-1}$. Suppose $\Phi_1>0$ and assume $\Phi_k=0$ for some $k\leq r$. Assume $k$ is the first such index. Then, since 
$$
-2+\frac {\eta_{k+1}\cdot \eta_{k-1}} {\eta_k^2} <0,
$$
by Newton's inequalities for the polynomial 
\begin{equation}\label{pavs}
P(s)= h(\xx+s\uu)/h(\xx) = \sum_{j=0}^r \frac{D_\uu^j h(\xx)} {j!} s^j/h(\xx)= \sum_{j=0}^r \frac{\eta_j} {j!} s^j,   
\end{equation}
\eqref{plog} implies $\Phi_{k+1} <0$, a contradiction. 
\end{proof}

\begin{lemma}\label{stepped}
If $\rk(\uu) = r$, $2\leq k \leq r$ and $\Phi_1 \neq 0$, then 
$$
\frac {\Phi_{k}} {\Phi_{k-1}} \leq \frac k {k-1} \cdot \frac {r-k+2}{r} \cdot  \frac {D_\uu h} h. 
$$
\end{lemma}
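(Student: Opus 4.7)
The plan is to induct on $k \geq 2$ with hypothesis $t_k := \Phi_k/\Phi_{k-1} \leq \lambda_k$, where $\lambda_k := \alpha \cdot \frac{k}{k-1} \cdot \frac{r-k+2}{r}$, $\alpha := \eta_1 = D_\uu h(\xx)/h(\xx)$, and $\beta_j := \eta_j/\eta_{j-1}$. First I would note that $\Phi_1 \geq 0$ by \eqref{corn}, so the hypothesis $\Phi_1 \neq 0$ combined with Lemma~\ref{post} forces $\Phi_j > 0$ for $1 \leq j \leq r$, legitimising division by $\Phi_k$ and making all $t_j$ and $\beta_j$ well-defined and positive.

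The base case $k=2$ follows by specialising Lemma~\ref{t-rec} to index $k=1$: since $\Phi_0 = -D_\vv(h/h) = 0$ and $\eta_0 = 1$, the recursion collapses to $\Phi_2 \leq 2\alpha\,\Phi_1$, i.e.\ $t_2 \leq 2\alpha = \lambda_2$. For the inductive step I would divide the inequality of Lemma~\ref{t-rec} through by $\Phi_k > 0$ to obtain
$$t_{k+1} \leq 2\beta_k + \frac{\beta_k(\beta_{k+1} - 2\beta_k)}{t_k}.$$
Newton's inequalities applied to the polynomial $P(s)$ in \eqref{pavs} give $\beta_{j+1}/\beta_j \leq (r-j)/(r-j+1)$, yielding by telescoping from $j=1$ both $\beta_k \leq \alpha(r-k+1)/r$ and $\beta_{k+1} - 2\beta_k \leq -\beta_k(r-k+2)/(r-k+1) < 0$. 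Because the coefficient of $1/t_k$ is then negative, substituting the inductive bound $t_k \leq \lambda_k$ (which flips the inequality) yields
$$t_{k+1} \leq \phi(\beta_k), \qquad \phi(x) := 2x - \frac{r-k+2}{(r-k+1)\,\lambda_k}\,x^2.$$

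To close the induction I would show that $\phi$ is increasing on the interval $[0,\,\alpha(r-k+1)/r]$: $\phi$ is a concave quadratic with vertex at $\lambda_k(r-k+1)/(r-k+2) = \alpha\,k(r-k+1)/((k-1)r)$, which exceeds the Newton ceiling $\alpha(r-k+1)/r$ by a factor $k/(k-1) \geq 1$, so that ceiling lies on the increasing branch. Thus $\phi(\beta_k) \leq \phi(\alpha(r-k+1)/r)$, and a direct simplification returns exactly $\alpha(k+1)(r-k+1)/(kr) = \lambda_{k+1}$, closing the induction.

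The main obstacle is tracking inequality directions: the inductive bound $t_k \leq \lambda_k$ must be substituted into a coefficient that is negative (reversing the sense), and afterwards one must verify that $\beta_k$'s Newton ceiling sits on the increasing side of the concave parabola so that upper-bounding $\beta_k$ inside $\phi$ produces a valid upper bound on $\phi(\beta_k)$ rather than the opposite.
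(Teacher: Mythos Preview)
Your proof is correct and follows essentially the same route as the paper's: induction on $k$, divide the recursion of Lemma~\ref{t-rec} by $\Phi_k$, use Newton's inequalities on $P(s)$ to bound both $\eta_{k+1}\eta_{k-1}/\eta_k^2$ and $\beta_k$, substitute the inductive bound (noting the negative coefficient), and then check the resulting concave quadratic is increasing up to the Newton ceiling. The only cosmetic difference is that the paper normalizes throughout by $\eta_1$ (working with $\alpha_j=\eta_1^{-1}\Phi_j/\Phi_{j-1}$ and $t_k=\beta_k/\eta_1$), whereas you keep $\eta_1$ as a floating factor; and in one sentence you tacitly perform two substitutions at once (the Newton bound on $\beta_{k+1}-2\beta_k$ together with $t_k\le\lambda_k$) to arrive at $\phi(\beta_k)$, but both go in the right direction and the meaning is clear.
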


\begin{proof}
The proof is by induction over $k\geq 2$. The case when $k=2$ follows immediately from Lemma~\ref{t-rec}. Assume true for all indices  $\leq k$, where $k \geq 2$. By Lemma~\ref{post}, $\Phi_1,\ldots, \Phi_r$ are positive. Let 
$\alpha_j =\eta_1^{-1} \cdot \Phi_j /\Phi_{j-1}$ for $2 \leq j \leq r+1$.  By Lemma~\ref{t-rec}, 
$$
\alpha_{k+1} \leq 2 t_k +\left(-2+\frac {\eta_{k+1}\cdot \eta_{k-1}} {\eta_k^2} \right) \cdot t_k^2 \cdot\alpha_k^{-1}, 
$$
for $2 \leq k \leq r$, 
where $t_k=\eta_k\cdot \eta_{k-1}^{-1}\cdot \eta_1^{-1}$. From Newton's inequalities and \eqref{ncons} for the polynomial \eqref{pavs},
we deduce 
$$
\frac {\eta_{k+1}\cdot \eta_{k-1}} {\eta_k^2} \leq \frac {r-k}{r-k+1} \ \ \mbox{ and } \ \ t_k \leq \frac {r-k+1}{r},
$$
for $1 \leq k \leq r$. 
Hence 
$$
\alpha_{k+1} \leq 2 t_k -\frac {r-k+2}{r-k+1}\cdot t_k^2 \cdot\alpha_k^{-1}, 
$$ 
and by induction (using the bound for $\alpha_k$),
\begin{equation}\label{trik}
\alpha_{k+1} \leq 2 t -\frac {r}{r-k+1}\cdot \frac {k-1}{k} \cdot t^2, \ \ \ \ \ \ \mbox{ where } t=t_k,
\end{equation}
for $2 \leq k \leq r$.
The right-hand-side of \eqref{trik} is increasing in $t$ for 
$$
0\leq t \leq  \frac {r-k+1}{r}\cdot \frac {k}{k-1},
$$
and hence we obtain a valid inequality if we plug in $t= (r-k+1)/r$ in \eqref{trik}, for which we get the desired upper bound.
\end{proof}
\begin{corollary}\label{fk1}
If $\rk(\uu) \leq r$, $1\leq k \leq r$ and $\Phi_1 \neq 0$, then 
$$
\frac {\Phi_{k}} {\Phi_1} \leq k! \binom r {k-1} \cdot \left(\frac {\eta_1} r  \right)^{k-1} . 
$$
\end{corollary}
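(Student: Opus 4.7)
The plan is to deduce the corollary from Lemma~\ref{stepped} by a direct telescoping of $\Phi_k/\Phi_1$, followed by an elementary arithmetic simplification. The case $k=1$ is trivial (both sides equal $1$, using the conventions $\binom{r}{0}=1$ and empty product), so I will assume $k\geq 2$.

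First I would treat the case $\rk(\uu)=r$. By Lemma~\ref{post}, the assumption $\Phi_1\neq 0$ forces $\Phi_j>0$ for all $1\leq j\leq r$, so the telescoping
$$
\frac{\Phi_k}{\Phi_1}=\prod_{j=2}^{k}\frac{\Phi_j}{\Phi_{j-1}}
$$
is valid with all factors strictly positive. Applying Lemma~\ref{stepped} to each ratio and using $\eta_1=D_\uu h/h$ yields
$$
\frac{\Phi_k}{\Phi_1}\leq \left(\frac{\eta_1}{r}\right)^{k-1}\prod_{j=2}^{k}\frac{j\,(r-j+2)}{j-1}.
$$
Now it remains to simplify the product. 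Telescoping gives $\prod_{j=2}^{k}j/(j-1)=k$, while
$$
\prod_{j=2}^{k}(r-j+2)=r(r-1)\cdots(r-k+2)=\frac{r!}{(r-k+1)!},
$$
so the product equals $k\cdot r!/(r-k+1)!=k!\binom{r}{k-1}$, which is exactly what is claimed.

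Finally I would handle the case $\rk(\uu)=r'<r$. If $k>r'$, then $D_\uu^k h\equiv 0$ by Lemma~\ref{rankalt}, so $\Phi_k=0$ and the inequality is trivial. If $k\leq r'$, the argument above applied with $r'$ in place of $r$ gives the bound with $r'$; to deduce the claimed bound with $r$, note that
$$
\frac{s!/(s-k+1)!}{s^{k-1}}=\prod_{i=0}^{k-2}\!\Bigl(1-\frac{i}{s}\Bigr)
$$
is nondecreasing in $s$, so replacing $r'$ by the larger $r$ only weakens the inequality. I do not anticipate any real obstacle here: once Lemma~\ref{stepped} is in hand, the corollary is essentially a bookkeeping exercise, with the only mild subtlety being the reduction from $\rk(\uu)\leq r$ to the equality case treated by the lemma.
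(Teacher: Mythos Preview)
Your proof is correct and follows exactly the route the paper takes: reduce to the case $\rk(\uu)=r$ via the monotonicity of $s\mapsto s^{-(k-1)}\binom{s}{k-1}$, then telescope $\Phi_k/\Phi_1$ using Lemma~\ref{stepped}. The paper compresses all of this into two sentences, but your expanded version (including the check that $\Phi_k=0$ when $k$ exceeds the actual rank) is the intended argument.
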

\begin{proof}
Since $r \mapsto r^{-k+1}\binom r {k-1}$ is increasing we may assume $\rk(\uu) = r$. The lemma now follows by iterating Lemma~\ref{stepped}. 
\end{proof}

 \section{Bounds on zeros of mixed characteristic polynomials}\label{sboundmix}
 Let $h$ be hyperbolic with respect to $\ee$, and let $\vv_1,\ldots, \vv_m \in \Lambda_+(\ee)$. 
 Denote by $\lma(\vv_1,\ldots,\vv_m)$ the largest zero of the \emph{mixed characteristic polynomial}
\begin{equation}\label{mip}
t \mapsto h[\vv_1, \ldots, \vv_m](t\ee+\one)=(1-D_{\vv_1})\cdots (1-D_{\vv_m})h (t\ee), 
\end{equation}
 where $\one \in \RR^m$ is the all ones vector (in the $y$-variables).
 To prove Theorem~\ref{hypprob}, we want to bound $\lma(\vv_1,\ldots,\vv_m)$ conditioned on $\vv_1, \ldots, \vv_m \in \Lambda_+(\ee)$,  $\vv_1+\cdots+\vv_m =\ee$, $\tr(\vv_i) \leq \epsilon$ and $\rk(\vv_i) \leq r$ for all $1\leq i \leq m$. 
 
 \begin{remark}\label{hypid}
Since $\ee$ is in the open hyperbolicity cone $\Gamma_{++}$ of $h[\vv_1, \ldots, \vv_m]$, $\rho\ee +\one$ is also in $\Gamma_{++}$ for $\rho$ sufficiently large. By Theorem~\ref{hypfund} (2), $\rho$ is larger than $\lma(\vv_1,\ldots,\vv_m)$  if and only if $\rho \ee +\one$ is in  $\Gamma_{++}$ of $h[\vv_1, \ldots, \vv_m]$. Consequently 
 $$
 \lma(\vv_1,\ldots,\vv_m)= \inf \{ \rho >0 :  \rho \ee +\one \in \Gamma_{++}\}.
 $$
 \end{remark}
 Next we want to relate $\lma(\vv_1+\cdots+ \vv_m)$ to $\lma(\vv_1,\ldots,\vv_m)$. 
\begin{theorem}\label{linearize}
 If $h$ be hyperbolic with respect to $\ee$ and $\vv_1,\ldots, \vv_m \in \Lambda_+(\ee)$, then 
\begin{align*}
 \lma(\vv_1+\cdots +\vv_m) &\leq  \lma(\vv_1,\ldots,\vv_m), \mbox{ and } \\
\lmi(\vv_1+\cdots +\vv_m) &\geq  \lmi(\vv_1,\ldots,\vv_m). 
\end{align*}
\end{theorem}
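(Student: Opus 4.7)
The plan is to deduce both inequalities from a containment of hyperbolicity cones. Alongside $H := h[\vv_1,\ldots,\vv_m]$, introduce the auxiliary polynomial
\[
G(\xx, y) := h\!\left(\xx - \sum_{j=1}^m y_j \vv_j\right),
\]
which is hyperbolic with respect to $(\ee, 0, \ldots, 0)$ with open hyperbolicity cone $\Gamma_{++}(G) = \{(\xx, y) : \xx - \sum_j y_j \vv_j \in \Lambda_{++}(h)\}$. By Remark~\ref{hypid} applied to $H$ and to $G$, the $\lma$ inequality reduces to the cone inclusion $\Gamma_{++}(H) \subseteq \Gamma_{++}(G)$: specializing at $(\rho\ee, \one)$ for $\rho > \lma(\vv_1,\ldots,\vv_m)$ gives $\rho\ee - \sum\vv_i \in \Lambda_{++}(h)$, i.e., $\rho > \lma(\vv_1+\cdots+\vv_m)$. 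The $\lmi$ statement follows by the entirely analogous argument on the opposite end of the cone.

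Both cones are open, convex, contain $(\ee,0)$ in their interior, and contain the common ``safe region'' $\Lambda_{++}(h) \times (-\RR_+^m)$ (Theorem~\ref{mixhyp} for $\Gamma_{++}(H)$; the identity $\Lambda_{++}(h) + \Lambda_+(h) = \Lambda_{++}(h)$ for $\Gamma_{++}(G)$). I plan to promote this to the full inclusion via the pointwise comparison
\[
h[\vv_1,\ldots,\vv_m](\xx, y) \;\le\; h\!\left(\xx - \sum_j y_j \vv_j\right), \qquad y \in \RR_+^m,\ \xx,\ \xx - \textstyle\sum_j y_j\vv_j \in \Lambda_+(h),
\]
which by a connectedness argument forces $\Gamma_{++}(H) \subseteq \Gamma_{++}(G)$: along any path from $(\ee, 0)$ to $(\xx,y)$ inside $\Gamma_{++}(H)$ the polynomial $H$ stays strictly positive, and the pointwise bound then prevents $G$ from vanishing, so the path remains inside $\Gamma_{++}(G)$.

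I would prove the pointwise comparison by induction on $m$. The base $m=1$ is $(1-D_\vv)h(\xx) \le h(\xx-\vv)$, which drops out of Taylor's theorem with integral remainder
\[
h(\xx-\vv) - (1-D_\vv)h(\xx) = \int_0^1 (1-s)\, D_\vv^2 h(\xx - s\vv)\, ds,
\]
using that iterating Theorem~\ref{direct}(1) renders $D_\vv^2 h$ hyperbolic with $\Lambda_+(h) \subseteq \Lambda_+(D_\vv^2 h)$ and hence nonnegative on $\Lambda_+(h)$, while convexity of $\Lambda_+(h)$ keeps the segment $\xx - s\vv$ inside $\Lambda_+(h)$ for $s \in [0,1]$. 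For the inductive step with $P(\xx) := h[\vv_1,\ldots,\vv_m](\xx, \one)$, I would chain
\[
(1-D_{\vv_{m+1}})P(\xx) \;\le\; P(\xx-\vv_{m+1}) \;\le\; h\!\left(\xx - \textstyle\sum_{i=1}^{m+1}\vv_i\right),
\]
with the second bound being the inductive hypothesis applied at the shifted point $\xx - \vv_{m+1}$.

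The hard part will be the first inequality in this chain. Since $\vv_{m+1}$ need not lie in the hyperbolicity cone of $P$, nonnegativity of the required Taylor remainder $D_{\vv_{m+1}}^2 P$ does not follow directly from Theorem~\ref{direct}(1). Commutativity of directional derivatives identifies $D_{\vv_{m+1}}^2 P$ with the mixed polynomial $g[\vv_1,\ldots,\vv_m](\,\cdot\,, \one)$ of $g := D_{\vv_{m+1}}^2 h$ (again hyperbolic, with $\Lambda_+(h) \subseteq \Lambda_+(g)$), so the bottleneck becomes a positivity statement for such a mixed polynomial. As the inductive hypothesis only yields an \emph{upper} bound, closing the induction cleanly will require either strengthening the induction to a two-sided bound on the mixed polynomial or extracting the needed positivity from the correlation inequalities developed in Section~\ref{scorr}.
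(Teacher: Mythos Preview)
Your overall strategy—reducing both inequalities to a cone inclusion $\Gamma_{++}(H)\subseteq\Gamma_{++}(G)$ and attacking the latter via a pointwise inequality $H\le G$ plus connectedness—is reasonable in outline, but the proof you sketch does not close, and the gap you flag is real. In the inductive step you want the base case applied to $P=h[\vv_1,\ldots,\vv_m](\cdot,\one)$, which requires $D_{\vv_{m+1}}^2P\ge 0$ along the segment. But $P$ is only the restriction of the hyperbolic polynomial $H_m$ to the affine slice $\{y=\one\}$; it is not itself hyperbolic, and although $(\vv_{m+1},0)\in\Gamma_+(H_m)$, the points where you need positivity lie in the region $\{y\ge 0,\ \xx\in\Lambda_+,\ \xx-\sum y_j\vv_j\in\Lambda_+\}$, which by the very inclusion you are trying to prove is \emph{larger} than $\Gamma_+(H_m)$. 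So Theorem~\ref{direct}(1) gives no control there. Neither proposed rescue is readily available: the inequalities of Section~\ref{scorr} bound ratios $\Phi_k/\Phi_{k-1}$ at points of $\Lambda_{++}$ and yield no sign information for $g[\vv_1,\ldots,\vv_m]$ outside $\Gamma_+(H_m)$, and a two-sided inductive hypothesis would require a lower bound on $H_m$ that is just as hard as the original problem.

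The paper's proof sidesteps all of this with a short, entirely different argument that never compares $H$ and $G$ pointwise. It uses convexity of the largest-eigenvalue function (Theorem~\ref{hypfund}(3)) for the enlarged mixed polynomial $h[\vv_1,\vv_1,\vv_2,\ldots,\vv_m]$ to show that replacing $\vv_1$ by the pair $(\vv_1/2,\vv_1/2)$ can only decrease $\lma$; iterating, $\lma(\vv_1,\ldots,\vv_m)$ dominates the largest zero of $\prod_i(1-D_{\vv_i}/N)^N h(t\ee)$ for every dyadic $N$, and letting $N\to\infty$ turns each factor into $\exp(-D_{\vv_i})$, whose action on $h$ is the shift $h\mapsto h(\cdot-\vv_i)$. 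The $\lmi$ statement follows by the same argument with concavity in place of convexity. This uses only Theorem~\ref{hypfund} and nothing from Section~\ref{scorr}.
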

\begin{proof}
Consider the polynomial $g(\xx,\yy)=h[\vv_1, \vv_1, \vv_2\ldots, \vv_m]$, which is hyperbolic with respect to $\ee$. Let $\gamma(\zz)$ denote the largest zero of $g(t\ee-\zz)$. Let further $\ee_1,\ldots, \ee_{m+1}$ be the standard basis in the $y$-variables. Then, if $\yy=\ee_3+\cdots+\ee_{m+1}$, 
\begin{align*}
&\lma(\vv_1/2, \vv_1/2, \vv_2,\ldots \vv_m) = \gamma \left(-\frac 1 2 (\ee_1+ \yy)- \frac 1 2 (\ee_2+ \yy) \right) \\
\leq  &\frac 1 2 \gamma\left(-\ee_1- \yy \right) + \frac 1 2 \gamma\left(-\ee_2- \yy \right) = \lma(\vv_1,\vv_2, \ldots, \vv_m),
\end{align*}
by the convexity of $\gamma$, see Theorem~\ref{hypfund}. Iterating this we get that $\lma(\vv_1,\ldots, \vv_m)$ is greater or equal to the largest zero of 
$$
p_N(t) := \prod_{i=1}^m \left(1-\frac {D_{\vv_i}} N\right)^N h(t\ee), 
$$
where $N=2^n$, for any positive integer $n$. However, 
$$
\lim_{N \to \infty} p_N(t)= h(t\ee-\vv_1-\cdots-\vv_m),
$$
and the first statement follows. 

The second statement follows similarly by using the concavity of $\lmi$. 
 \end{proof}
 
\begin{lemma}\label{stayabove}
Let $h$ be hyperbolic, and let $\delta$ and $\mu$ be two positive numbers such that either 
\begin{itemize}
\item $\mu >1$, or 
\item $1\leq \delta \leq 2$ and $\mu >1-\delta/r$. 
\end{itemize}
If $\xx \in \Lambda_{++}$, $\uu \in \Lambda_+$, $0<\rk(\uu)\leq r$ and  $h(\xx)/D_\uu h(\xx) \geq \mu$, then 
$$
(h-D_\uu h)(\xx+\delta \uu) > 0.
$$
\end{lemma}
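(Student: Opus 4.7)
My plan is to reduce the inequality to a one-variable claim about the roots of the restriction of $h$ to the line through $\xx$ in direction $\uu$. Since $\xx \in \Lambda_{++}$, Theorem~\ref{hypfund}(4) gives that $h$ is also hyperbolic with respect to $\xx$ with $\Lambda_+(\xx) = \Lambda_+(\ee)$, so $\uu \in \Lambda_+(\xx)$. Let $\mu_1 \geq \cdots \geq \mu_d \geq 0$ be the eigenvalues of $\uu$ with respect to $\xx$; by Lemma~\ref{rankalt}, exactly $r' := \rk(\uu) \leq r$ of them are positive, call these $\rho_1, \ldots, \rho_{r'}$. Combining the identity $h(t\xx + \uu) = h(\xx)\prod_{j}(t + \mu_j)$ from \eqref{dalambdas} with the homogeneity substitution $h(\xx + s\uu) = s^d h(s^{-1}\xx + \uu)$, I would establish the factorization
$$
Q(s) := \frac{h(\xx + s\uu)}{h(\xx)} = \prod_{j=1}^{r'} (1 + s\rho_j).
$$

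By \eqref{dvalt} we have $Q'(s) = D_\uu h(\xx + s\uu)/h(\xx)$, so dividing the desired inequality by $h(\xx) > 0$ and factoring $Q(\delta)$ out reduces the claim to
$$
Q(\delta) - Q'(\delta) = Q(\delta)\left(1 - \sum_{j=1}^{r'} \frac{\rho_j}{1 + \delta\rho_j}\right) > 0.
$$
Since $Q(\delta) = \prod_j (1 + \delta\rho_j) > 0$, the task is to verify
$$G := \sum_{j=1}^{r'} \frac{\rho_j}{1+\delta\rho_j} < 1$$
under either hypothesis. Note that by the hypothesis, $\eta_1 := \sum_j \rho_j = D_\uu h(\xx)/h(\xx) \leq 1/\mu$.

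For the first case $\mu > 1$, the bound is immediate: since $1 + \delta\rho_j \geq 1$, we get $G \leq \sum_j \rho_j = \eta_1 \leq 1/\mu < 1$. For the second case ($1 \leq \delta \leq 2$ and $\mu > 1 - \delta/r$), I would use that $f(x) = x/(1+\delta x)$ is concave on $[0,\infty)$, since $f''(x) = -2\delta/(1+\delta x)^3 < 0$, and apply Jensen's inequality with $r'$ equal weights to obtain $G = \sum_j f(\rho_j) \leq r'f(\eta_1/r') = \eta_1/(1 + \delta\eta_1/r')$. This expression is increasing in both $r'$ and $\eta_1$ (elementary derivative computations), so applying $r' \leq r$ and $\eta_1 \leq 1/\mu$ yields $G \leq r/(r\mu + \delta)$; the hypothesis $\mu > 1 - \delta/r$ is exactly $r\mu + \delta > r$, giving $G < 1$.

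The only substantive step I expect to require care is the factorization $Q(s) = \prod(1 + s\rho_j)$, which rests on identifying the $r'$ roots of the degree-$r'$ polynomial $s \mapsto h(\xx + s\uu)$ with the reciprocals of the positive eigenvalues of $\uu$ relative to $\xx$; once this is in hand, the rest is an application of Jensen, and no appeal to the higher-order correlation inequalities of Section~\ref{scorr} is needed for this particular lemma.
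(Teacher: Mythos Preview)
Your proof is correct and takes a genuinely different, more elementary route than the paper's. The paper argues the two cases separately using multivariate tools: for $\mu>1$ it invokes the concavity (and degree-one homogeneity) of $h/D_{\uu}h$ on $\Lambda_{++}$ to get monotonicity along the ray $\xx+t\uu$; for $1\le\delta\le 2$ it expands $(1-D_{\uu})e^{\delta D_{\uu}}h(\xx)=\sum_{k}a_k D_{\uu}^k h(\xx)$, uses the sign pattern $a_0=1$, $a_1\ge 0$, $a_k\le 0$ (which is exactly where $1\le\delta\le 2$ enters), and then bounds each $D_{\uu}^k h(\xx)$ via Newton's inequalities for the real-rooted polynomial $s\mapsto h(\xx+s\uu)$. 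You instead reduce immediately to the one-variable polynomial $Q(s)=\prod_{j=1}^{r'}(1+s\rho_j)$ by taking eigenvalues of $\uu$ with respect to $\xx$, after which Case~1 is a one-line estimate and Case~2 is Jensen applied to the concave map $x\mapsto x/(1+\delta x)$, followed by monotonicity in $r'$ and $\eta_1$. Your factorization step is sound: $Q$ has degree exactly $r'$ because $D_{\uu}^{r'}h(\xx)\neq 0$ on $\Lambda_{++}$ by Lemma~\ref{rankalt} and Theorem~\ref{direct}(1), and its $r'$ roots are $-1/\rho_j$ via homogeneity. A pleasant by-product is that your second case actually works for all $\delta>0$ with $\mu>1-\delta/r$, not just $1\le\delta\le 2$; the paper's argument needs the upper bound $\delta\le 2$ to force $a_k\le 0$ for $k\ge 2$. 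Both proofs ultimately exploit the real-rootedness of $s\mapsto h(\xx+s\uu)$ (yours directly, the paper's through Newton), but your packaging avoids the higher correlation machinery and the concavity of $h/D_{\uu}h$ entirely.
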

\begin{proof}
If $\mu>1$, then $h(\xx)/D_\uu h(\xx) >1$ and then $h(\xx+\delta \uu)/D_\uu h(\xx+\delta \uu) \geq h(\xx)/D_\uu h(\xx) >1$, by Theorem~\ref{direct}. Hence $(h-D_\uu h)(\xx+\delta \uu) > 0$. 

Suppose $1\leq \delta \leq 2$ and $\mu >1-\delta/r$. 
We may write 
$$
(h-D_\uu h)(\xx+\delta \uu) = (1-D_\uu)\exp(\delta D_\uu) h(\xx) = \sum_{k= 0}^r a_k \cdot D_\uu^k h(\xx),  
$$
where $a_0=1$ and $a_k= \delta^k/k!-\delta^{k-1}/(k-1)!$ if $k \geq 1$. By \eqref{ajbound}, 
$$
D_\uu^k h(\xx) \leq k! r^{-k}\binom r k h(\xx) \left(\frac {D_\uu h (\xx)}{h(\xx)} \right)^k, \ \ \ 0 \leq k \leq r,
$$ 
with equality for $k=0,1$. Now $a_0 =1, a_1 \geq 0$ and $a_k \leq 0$ for $2 \leq k \leq r$, 
since $1\leq \delta \leq 2$. Hence 
\begin{align*}
(h-D_\uu h)(\xx+\delta \uu) &\geq \sum_{k= 0}^r a_k \cdot k! r^{-k}\binom r k h(\xx) \mu^{-k} \\
&= h(\xx)\cdot \left(1+ \frac {\delta}{\mu r} \right)^{r-1}\cdot \left(1+\frac {\delta}{\mu r}-\frac 1 \mu \right),
\end{align*}
and the lemma follows. 
\end{proof}

 For the remainder of this section, let $h \in \RR[x_1,\ldots, x_n]$ be hyperbolic with respect to $\ee$, and let $\vv_1,\ldots, \vv_m \in \Lambda_{+}$ and $\xx \in \Lambda_{++}$. 
To enhance readability, 
let $\partial_j := D_{\vv_j}$. 
and 
$$
\xi_j[g] := \frac {g}{\partial_j g}.
$$

\begin{lemma}\label{eng2}
Suppose $(\delta, \mu) \in U_r$, where $r$ is a positive integer or $r=\infty$. 
If $\xx \in \Lambda_{++}$, $0<\rk(\vv_j) \leq r$, $0<\rk(\vv_i)$ and 
$$
\xi_j[h](\xx) \geq \mu,
$$ 
then 
$$
\xi_i[h-\partial_j h](\xx+\delta \vv_j) \geq \xi_i[h](\xx). 
$$
\end{lemma}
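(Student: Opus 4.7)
The plan is to reduce the inequality to a scalar estimate via Taylor expansion in direction $\vv_j$, and then recognise the defining inequality (\ref{dura}) of $U_r$ after applying Corollary~\ref{fk1}.

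Set $\eta_k = D_{\vv_j}^k h(\xx)/h(\xx)$, $\zeta = \partial_i h(\xx)/h(\xx) = 1/\xi_i[h](\xx)$, and $a_k = \delta^k/k! - \delta^{k-1}/(k-1)!$ (so $a_0=1$, $a_1=\delta-1$, and for $k\ge 2$ one has $a_k\le 0$ precisely when $\delta\le k$). Since directional derivatives commute and $D_{\vv_i}(D_{\vv_j}^k h/h) = -\Phi_k$, Taylor-expanding $h-\partial_j h$ and $\partial_i h-\partial_i\partial_j h$ along $\vv_j$ yields
\[
(h-\partial_j h)(\xx+\delta \vv_j) = h(\xx)\sum_{k=0}^r a_k \eta_k, \qquad (\partial_i h-\partial_i\partial_j h)(\xx+\delta \vv_j) = \zeta\cdot (h-\partial_j h)(\xx+\delta\vv_j) - h(\xx)\sum_{k=1}^r a_k \Phi_k.
\]
The first expression is strictly positive by Lemma~\ref{stayabove} (whose hypotheses on $(\delta,\mu)$ are exactly the non-(\ref{dura}) part of the definition of $U_r$). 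The second is strictly positive by hyperbolicity of $H(\xx,y)=h-y\partial_j h$ (Theorem~\ref{direct}(2)) together with a path argument in its open hyperbolicity cone from $(\xx,0)$ to $(\xx+\delta\vv_j,1)$. Cross-multiplying $\xi_i[h-\partial_j h](\xx+\delta \vv_j) \ge 1/\zeta$ and cancelling the common $\zeta\cdot (h-\partial_j h)(\xx+\delta\vv_j)$ collapses the target to the single scalar inequality $\sum_{k=1}^r a_k \Phi_k \ge 0$.

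To prove this, split on $\Phi_1$. If $\Phi_1 = 0$, Lemma~\ref{post} forces $\Phi_k=0$ for every $1\le k\le r$ and the sum vanishes. Otherwise all $\Phi_k>0$, and Corollary~\ref{fk1} provides $\Phi_k/\Phi_1 \le k!\binom{r}{k-1}(\eta_1/r)^{k-1}$. Using this upper bound where $a_k\le 0$, and the trivial $\Phi_k\ge 0$ where $a_k\ge 0$, gives
\[
\sum_{k=1}^r a_k\Phi_k \ \ge\ \Phi_1\Big[(\delta-1) + \sum_{\substack{k\ge 2\\ a_k\le 0}} a_k\, k!\binom{r}{k-1}(\eta_1/r)^{k-1}\Big].
\]
Substituting $\eta_1\le 1/\mu$, setting $s=\delta/(r\mu)$, and summing the two binomial series $\sum_k\binom{r}{k}s^k=(1+s)^r$ and $\sum_k k\binom{r}{k}s^k=rs(1+s)^{r-1}$ rewrites the bracket as $(\delta-1)-(\delta/\mu)\cdot\big[(1+s)^{r-1}-s^{r-1}\big]/\big[(1+s)^r-s^r\big]$, which is nonnegative precisely by the defining inequality (\ref{dura}) of $U_r$.

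The main obstacle is the regime $\mu>1,\ \delta>2$, where some $a_k$ with $k\ge 2$ are positive and are discarded by the above estimate, making the resulting bound strictly weaker than (\ref{dura}). I expect the resolution to exploit the extra room available when $\mu>1$: the monotonicity of $\xi_j[h]$ along $\vv_j$ (used in the $\mu>1$ branch of Lemma~\ref{stayabove}) keeps the segment $(\xx+t\vv_j,t),\ t\in[0,1]$, inside the open hyperbolicity cone of $H$, so that the concavity of $H/\partial_i H$ on that cone (Theorem~\ref{direct}(3)) together with the identity $\partial_y(H/\partial_i H)=-D_{\vv_j}(H/\partial_i H)$ at $(\xx,0)$ should provide a concavity-based comparison of the two endpoint values that bypasses the lossy sign-splitting in Step~3.
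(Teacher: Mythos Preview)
Your argument is essentially identical to the paper's: expand $(1-\partial_j)\exp(\delta\partial_j)h(\xx)$ and its $\partial_i$-derivative, reduce the claimed inequality to $\sum_{k\ge 1}a_k\Phi_k\ge 0$, dispose of $\Phi_1=0$ via Lemma~\ref{post}, and when $\Phi_1>0$ replace each $\Phi_k/\Phi_1$ by the bound from Corollary~\ref{fk1} to land on \eqref{dura}. Two small remarks. First, your sign-splitting is unnecessary once you observe that in the regime $1\le\delta\le 2$ every $a_k$ with $k\ge 2$ is nonpositive; then the bracketed sum equals the full binomial expression and your identification with \eqref{dura} is exact. Second, the positivity of the denominator follows exactly as in the paper: Lemma~\ref{stayabove} gives $(h-\partial_jh)(\xx+\delta\vv_j)>0$, hence $(\xx+\delta\vv_j,1)$ lies in the open hyperbolicity cone of $H(\xx,y)=h-y\partial_jh$ (connect $(\xx+\delta\vv_j,0)$ to $(\xx+\delta\vv_j,1)$; along this segment $H>0$ because $\xi_j[h](\xx+\delta\vv_j)>1$), and then Theorem~\ref{direct}(1) applied to $H$ and $(\vv_i,0)$ yields $(\partial_ih-\partial_i\partial_jh)(\xx+\delta\vv_j)>0$.

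The obstacle you flag for $\mu>1$, $\delta>2$ is real, and the paper's proof does not address it either: the sentence ``Since $0\le\delta-1\le 1$, all terms in \eqref{lala} are nonnegative'' is simply asserted, not derived from $(\delta,\mu)\in U_r$, and $U_r$ genuinely contains points with $\delta>2$ (e.g.\ $(\delta,\mu)=(3,2)$ for large $r$). So you have not missed anything the paper supplies. For the downstream applications (Theorem~\ref{mainbound2} and Theorem~\ref{specifics}) only the range $1\le\delta\le 2$ is ever used, so this does not affect the main results. Your proposed concavity fix, however, is unlikely to work as stated: concavity of $H/\partial_iH$ along a segment gives an \emph{upper} bound at the endpoint, whereas you need a lower bound $\xi_i[h-\partial_jh](\xx+\delta\vv_j)\ge\xi_i[h](\xx)$; the identity you write down does not reverse that direction.
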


\begin{proof}
Suppose $\xi_j[h](\xx) \geq \mu$ and $h$ is normalized so that $h(\ee) >0$. Write
$$
(h-\partial_j h)(\xx+\delta \vv_j) = (1-\partial_j)\exp(\delta \partial_j) h(\xx) = \sum_{k\geq 0} a_k \cdot \partial_j^k h(\xx),  
$$
where $a_0=1$ and $a_k= \delta^k/k!-\delta^{k-1}/(k-1)!$ if $k \geq 1$. By Lemma~\ref{stayabove}, $(h-\partial_j h)(\xx+\delta \vv_j)>0$, and then also $(\partial_ih-\partial_j \partial_ih)(\xx+\delta \vv_j)>0$ by Theorem~\ref{direct}.  
We want 
$$
\frac {\sum_{k\geq 0} a_k \cdot \partial_j^k h} {\sum_{k\geq 0} a_k \cdot \partial_i \partial_j^k h} \geq \frac h {\partial_i h},
$$
that is 
\begin{equation}\label{tronk}
\sum_{k\geq 1} a_k \cdot (\partial_j^k h \cdot \partial_i h -\partial_i\partial_j^k h \cdot h ) \geq 0. 
\end{equation}
Recall the definition of $\Phi_k$. For $\vv_i=\vv$ and $\vv_j=\uu$, \eqref{tronk} amounts to 
\begin{equation}\label{lala0}
\sum_{k\geq 1} a_k \cdot \Phi_k \geq 0. 
\end{equation}
If $\Phi_1=0$, then \eqref{lala0} holds by Lemma~\ref{post}.
If $\Phi_1 >0$, then 
for $\delta \geq 1$, \eqref{lala0} is equivalent to
\begin{equation}\label{lala}
\delta-1 \geq \left(\delta - \frac {\delta^2} 2 \right)\cdot \frac {\Phi_2}{\Phi_1} + \left(\frac {\delta^2} 2 - \frac {\delta^3} 6 \right)\cdot \frac {\Phi_3}{\Phi_1}+ \cdots  + \left(\frac {\delta^{k-1}} {(k-1)!} - \frac {\delta^k} {k!} \right)\cdot \frac {\Phi_k}{\Phi_1}+\cdots
\end{equation}
or, equivalently, 
$$
\delta-1\geq \left({\sum_{k \geq 1} (k-1)\frac {\Phi_k}{\Phi_1} \frac {\delta^{k-1}} {k!}}\right) \Big /\left({\sum_{k \geq 1} \frac {\Phi_k}{\Phi_1} \frac {\delta^{k-1}} {k!}}\right). 
$$
Since $0\leq \delta-1 \leq 1$, all terms in \eqref{lala} are nonnegative. If we use Corollary \ref{fk1} to replace ${\Phi_k}/{\Phi_1}$ with  $k! \binom r {k-1} \cdot \left( 1/\mu r  \right)^{k-1}$ in \eqref{lala} we get the inequality \eqref{dura}. 
\end{proof}

\begin{corollary}\label{corbond2}
Suppose $h$ is hyperbolic with respect to $\ee \in \RR^n$, and let $\Gamma_+$ be the (closed) hyperbolicity cone of $h[\vv_1,\ldots, \vv_m]$, where $\vv_1,\ldots, \vv_m \in \Lambda_{+}(\ee)$ and $1\leq \rk(\vv_k) \leq r_k$ for all $1\leq k \leq m$. 
Suppose  $\xx \in \Lambda_{++}(\ee)$ and $1\leq i<j\leq m$ are such that 
$$
\xx+\mu_k \ee_k \in \Gamma_+, \quad \mbox{ for } k \in \{i,j\},
$$
where $\mu_i,\mu_j>0$. 
Then 
$$
\xx+\delta_j\vv_j + \ee_j + \mu_i \ee_i \in \Gamma_+,
$$ 
whenever $(\delta_j, \mu_j) \in U_{r_j}$. 

Moreover if  $\xx+\mu_k \ee_k \in \Gamma_+$, where $(\delta_k, \mu_k) \in U_{r_k}$ for all $k \in [m]$, then
$$
\xx+ \left(1-\frac 1 m\right) \sum_{i=1}^m \delta_i \vv_i +\left(1-\frac 1 m\right)\sum_{i=1}^m \ee_i+ \frac 1 m\sum_{i=1}^m \mu_i\ee_i \in \Gamma_+. 
$$
\end{corollary}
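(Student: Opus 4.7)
The plan is to translate cone membership into a dictionary of one-dimensional $\xi$-inequalities, then apply Lemmas~\ref{stayabove} and~\ref{eng2} to prove the first assertion, and finally iterate and average to prove the second.

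The key dictionary is: for $\xx \in \Lambda_{++}(\ee)$ and $\mu \geq 0$, one has $\xx + \mu\ee_k \in \Gamma_+$ if and only if $\mu \leq \xi_k[h](\xx) := h(\xx)/\partial_k h(\xx)$, with strict inequality characterizing $\Gamma_{++}$. This follows by restricting $H := h[\vv_1,\ldots,\vv_m]$ to the line $\{\xx + y\ee_k : y \in \RR\}$ through the interior point $\xx \in \Gamma_{++}$: the restriction is the linear polynomial $y \mapsto h(\xx) - y\,\partial_k h(\xx)$, whose unique root is $\xi_k[h](\xx)$, so connectedness of $\Gamma_{++}$ yields the equivalence. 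Under this dictionary the hypotheses of the first assertion read $\xi_i[h](\xx) \geq \mu_i$ and $\xi_j[h](\xx) \geq \mu_j$.

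For the first assertion, I would apply Lemma~\ref{stayabove} with $\uu = \vv_j$, $\delta = \delta_j$, $\mu = \mu_j$ (whose hypotheses are subsumed by the definition of $U_{r_j}$), obtaining $(h - \partial_j h)(\xx+\delta_j\vv_j) > 0$; by the dictionary at $\mu = 1$ this says $\xx + \delta_j\vv_j + \ee_j \in \Gamma_{++}$. Next, Lemma~\ref{eng2} with the same parameters yields $\xi_i[h-\partial_j h](\xx+\delta_j\vv_j) \geq \xi_i[h](\xx) \geq \mu_i$. Restricting $H$ to the line $\{\xx + \delta_j\vv_j + \ee_j + y\,\ee_i : y \in \RR\}$ gives the linear polynomial $y \mapsto (h-\partial_j h)(\xx+\delta_j\vv_j) - y\,\partial_i(h-\partial_j h)(\xx+\delta_j\vv_j)$, whose unique root is $\xi_i[h-\partial_j h](\xx+\delta_j\vv_j) \geq \mu_i$. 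Hence the segment from $y=0$ (inside $\Gamma_{++}$) to $y=\mu_i$ avoids $\{H=0\}$ except possibly at the endpoint, and connectedness of $\Gamma_{++}$ yields $\xx + \delta_j\vv_j + \ee_j + \mu_i\ee_i \in \Gamma_+$.

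For the second assertion, I would fix $i \in [m]$, enumerate $[m]\setminus\{i\} = \{k_1,\ldots,k_{m-1}\}$, and iterate the first assertion, processing one index at a time, to obtain $\xx + \sum_{k\neq i}\delta_k\vv_k + \sum_{k\neq i}\ee_k + \mu_i\ee_i \in \Gamma_+$ for every $i$. Averaging these $m$ points gives, by convexity of $\Gamma_+$ (Theorem~\ref{hypfund}(1)), the claimed point $\xx + (1-\tfrac1m)\sum_k\delta_k\vv_k + (1-\tfrac1m)\sum_k\ee_k + \tfrac1m\sum_k\mu_k\ee_k \in \Gamma_+$. The hard part is justifying the iteration: at step $t$ the relevant base polynomial is $g_t := \prod_{s\leq t}(1-\partial_{k_s})h$, which is not homogeneous and so not literally hyperbolic, so one cannot invoke Lemmas~\ref{stayabove} and~\ref{eng2} verbatim for $g_t$ at $\xx + \sum_{s\leq t}\delta_{k_s}\vv_{k_s}$. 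Instead one reinterprets these lemmas as statements about the hyperbolic polynomial $h[\vv_k : k \notin \{k_1,\ldots,k_t\}]$ restricted to the affine subspace $\{y_{k_1}=\cdots=y_{k_t}=1\}$; the inductive invariant $\xi_l[g_t](\xx^{(t)}) \geq \mu_l$ for the remaining indices $l$ keeps the base point strictly inside $\Gamma_{++}$, so the concavity (Theorem~\ref{direct}(3)) and the Newton-type bounds of Section~\ref{scorr} underlying the lemmas remain available.
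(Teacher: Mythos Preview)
Your proposal is correct and takes essentially the same route as the paper: translate cone membership to $\xi$-inequalities, invoke Lemmas~\ref{stayabove} and~\ref{eng2} for the first assertion, then iterate and take the convex average for the second. The one place where the paper is cleaner is the iteration step: rather than worrying about the inhomogeneous $g_t$, the paper simply re-applies the already-proved first part with $h$ replaced by the genuinely hyperbolic polynomial $(1-y_{k_1}D_{\vv_{k_1}})h$ (Theorem~\ref{direct}(2)), observing that its associated mixed polynomial is still $h[\vv_1,\ldots,\vv_m]$ so $\Gamma_+$ is unchanged and the hypotheses $\xx'+\mu_k\ee_k\in\Gamma_+$ for the next step are supplied by the first part itself---so no detour through non-hyperbolic restrictions (and your phrase ``$h[\vv_k:k\notin\{k_1,\ldots,k_t\}]$ restricted to $\{y_{k_1}=\cdots=y_{k_t}=1\}$'' is a slight misstatement, since that polynomial has no such variables) is needed.
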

\begin{proof}
Recall that 
$$ 
\xx + \mu_k \ee_k \in \Gamma_+ \mbox{ if and only if } \xi_k[h] \geq \mu_k.
$$
By Lemma~\ref{stayabove} and \ref{eng2},  $\xx + \ee_j \in \Gamma_+$  and 
$$
\xi_i[h-\partial_jh](\xx+\delta_j \vv_j) \geq \mu_i, 
$$ 
which is equivalent to  
$$
\xx+\delta_j\vv_j + \ee_j+\mu_i \ee_i \in \Gamma_+.
$$
Hence the first part follows. 

Suppose $\xx+\mu_k \ee_k \in \Gamma_+$ for all $k \in [m]$. Since $\xx+s\ee_1,  \vv_1 \in \Gamma_+$ for all $s \leq \mu_1$, the vector 
$$
\xx' := \xx+\delta_1\vv_1 + \ee_1
$$
is in the hyperbolicity cone of $(1-y_1D_{\vv_1})h$. 
By the first part we have $\xx'+\mu_2\ee_2, \xx'+\mu_3\ee_3\in \Gamma_+$. Hence we may apply the first part of the theorem with $h$ replaced by $(1-y_1D_{\vv_1})h$ to conclude 
$$
\xx'+ \delta_2\vv_2 + \ee_2+ \mu_3\ee_3=\xx+\delta_1\vv_1 + \delta_2 \vv_2+\ee_1 +\ee_2 + \mu_3\ee_3\in \Gamma_+. 
$$
By continuing this procedure with different orderings we may conclude that 
$$
\xx +  \left(\sum_{i=1}^m \delta_i\vv_i\right)-\delta_j\vv_j  +\left(\sum_{i=1}^m \ee_i\right)-\ee_j+\mu_j\ee_j \in \Gamma_+,
$$
for each $1\leq j \leq m$. The second part now follows from convexity of $\Gamma_+$ upon taking the convex sum of these vectors.
\end{proof}

\begin{theorem}\label{mainbound2}
Suppose $h$ is hyperbolic with respect to $\ee \in \RR^n$ and suppose $\vv_1,\ldots, \vv_m \in \Lambda_{+}(\ee)$ are of rank at most $r$ and such that $\ee = \vv_1+\cdots+\vv_m$, where $\tr(\vv_j) \leq \epsilon$ for each $1\leq j \leq m$. Then
$$
\lma(\vv_1,\ldots, \vv_m) \leq \delta(\epsilon, m,r),
$$
where
$$
\delta(\epsilon, m,r) = \inf \left\{ \frac {\epsilon \mu+ \left(1-\frac 1 m\right)\delta} {1-\frac 1 m + \frac \mu m }  : (\delta, \mu) \in U_r \right\}.
$$
\end{theorem}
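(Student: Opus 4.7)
The plan is to combine Remark~\ref{hypid} with the second assertion of Corollary~\ref{corbond2}. By Remark~\ref{hypid}, writing $\Gamma_+$ for the closed hyperbolicity cone of $h[\vv_1,\ldots,\vv_m]$, the bound $\lma(\vv_1,\ldots,\vv_m) \leq \rho$ is equivalent to $\rho\ee + \one \in \Gamma_+$. So I would reduce the theorem to producing, for each $(\delta,\mu) \in U_r$, a membership $\rho_{\delta,\mu}\,\ee + \one \in \Gamma_+$ with $\rho_{\delta,\mu} = (\epsilon\mu + (1-1/m)\delta)/(1 - 1/m + \mu/m)$, and then take the infimum.

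To apply Corollary~\ref{corbond2} I would initialize at $\xx := \mu\epsilon\,\ee \in \Lambda_{++}$. A direct homogeneity computation gives
$$
\xi_k[h](\xx) \;=\; \frac{\mu\epsilon\, h(\ee)}{D_{\vv_k}h(\ee)} \;=\; \frac{\mu\epsilon}{\tr(\vv_k)} \;\geq\; \mu
$$
for every $k$, using $\tr(\vv_k) \leq \epsilon$; equivalently, $\xx + \mu\ee_k \in \Gamma_+$ for each $k$. The rank hypothesis $(\delta,\mu) \in U_{\rk(\vv_k)}$ of the corollary is automatic via the nesting $U_r \subseteq U_{r'}$ for $r' \leq r$; any $k$ with $\rk(\vv_k) = 0$ can be dropped at the outset, since in that case $D_{\vv_k}h \equiv 0$ and the factor $(1 - y_kD_{\vv_k})$ of $h[\vv_1,\ldots,\vv_m]$ is trivial.

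Then I would invoke the second part of Corollary~\ref{corbond2} with the uniform choice $\delta_k = \delta$, $\mu_k = \mu$ to obtain
$$
\mu\epsilon\,\ee + \Bigl(1-\tfrac{1}{m}\Bigr)\delta\sum_{k=1}^m\vv_k + \Bigl(1-\tfrac{1}{m}\Bigr)\sum_{k=1}^m\ee_k + \tfrac{\mu}{m}\sum_{k=1}^m\ee_k \in \Gamma_+.
$$
Substituting the hypothesis $\sum_k\vv_k = \ee$ and the identity $\sum_k\ee_k = \one$ collapses this to
$$
\bigl(\mu\epsilon + (1-\tfrac{1}{m})\delta\bigr)\,\ee + \bigl(1 - \tfrac{1}{m} + \tfrac{\mu}{m}\bigr)\,\one \in \Gamma_+,
$$
and dividing by the positive coefficient of $\one$ (legal since $\Gamma_+$ is a cone) yields $\rho_{\delta,\mu}\,\ee + \one \in \Gamma_+$. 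Taking the infimum over $U_r$ then completes the argument. I do not foresee a real obstacle: the substantive input is already packaged inside Corollary~\ref{corbond2}, so what remains is a short bookkeeping step together with the homogeneity identity that delivers the initial condition $\xi_k[h](\xx) \geq \mu$ in one line.
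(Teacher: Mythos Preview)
Your proposal is correct and follows essentially the same route as the paper's proof: initialize at $\xx=\mu\epsilon\,\ee$, verify $\xx+\mu\ee_k\in\Gamma_+$ via the trace bound, apply the second part of Corollary~\ref{corbond2} with the uniform choice $\delta_k=\delta$, $\mu_k=\mu$, use $\sum_k\vv_k=\ee$ to collapse the output, and then invoke Remark~\ref{hypid} after rescaling. Your handling of the side issues (dropping rank-zero $\vv_k$, and the nesting $U_r\subseteq U_{r'}$ for $r'\le r$) is a bit more explicit than the paper's, but the argument is the same.
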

\begin{proof}
For $\mu >0$, set $\xx=\epsilon \mu\ee$ and $\mu_i=\mu$ for $1\leq i \leq m$.  Then 
$\xx+\mu_i \ee_i = \mu(\epsilon \ee+ \ee_i) \in \Lambda_+$ since 
$$
h[\vv_1,\ldots, \vv_m](\epsilon \ee + \ee_i)= \epsilon h(\ee)- D_{\vv_i}h(\ee) = h(\ee)(\epsilon -\tr(\vv_i)) \geq 0. 
$$
Apply Corollary \ref{corbond2} to conclude that :
$$
\left(\epsilon \mu+ \left(1-\frac 1 m\right)\delta\right) \ee + \left(1-\frac 1 m + \frac \mu m \right) \one \in \Gamma_+, 
$$
whenever $(\delta, \mu)\in U_r$. 
Hence by (the homogeneity of $\Gamma_+$ and) Remark \ref{hypid}, the maximal zero is at most $\delta(\epsilon, m,r)$. 
\end{proof}

In Section \ref{sspec} we compute $\delta(\epsilon, m,r)$ for special cases and prove an upper bound.

\section{Proof of the main theorem}\label{sproof}
To prove Theorem~\ref{t1} we use the following theorem.
\begin{theorem}\label{hypprob}
Suppose $h$ is hyperbolic  with respect to $\ee$. Let $\XR_1, \ldots, \XR_m$ be independent random vectors in $\Lambda_+(\ee)$  with finite supports  such that 
\begin{equation}\label{hypeta2}
\sum_{i=1}^m \EE\XR_i =\ee, 
\end{equation}
\begin{equation}\label{hyptr}
\tr(\EE \XR_i) \leq \epsilon \mbox{ for all } 1\leq i \leq m,
\end{equation}
and 
$$
\rk(\EE \XR_i) \leq r \mbox{ for all } 1\leq i \leq m,
$$
then 
\begin{equation}\label{hypbig}
\PP\left[ \lma\left(\sum_{i=1}^m \XR_i \right) \leq \delta(\epsilon,m,r) \right] >0.
\end{equation}

\begin{proof}
Let $V_i$ be the support of $\XR_i$, for each $1 \leq i \leq m$. By Theorem~\ref{mixedchar}, the family 
$$
\{h[\vv_1,\ldots, \vv_m](t\ee+\one)\}_{\vv_i \in V_i}
$$
is compatible. By Theorem~\ref{expfam} there are vectors $\vv_i \in V_i$, $1\leq i \leq m$, such that the largest zero of $h[\vv_1,\ldots, \vv_m](t\ee+\one)$ is smaller or equal to the largest zero of 
$$
 \EE h[\XR_1,\ldots, \XR_m](t\ee+\one)= h[\EE \XR_1,\ldots, \EE \XR_m](t\ee+\one).
$$
In other words,  there are vectors $\vv_i \in V_i$, $1\leq i \leq m$, such 
$$
\lma(\vv_1,\ldots, \vv_m) \leq \lma(\EE \XR_1,\ldots, \EE \XR_m)
$$
The theorem now follows from Theorem~\ref{mainbound2} and Theorem~\ref{linearize}. 
\end{proof}

\end{theorem}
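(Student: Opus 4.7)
The plan is to reduce this probabilistic statement to a deterministic bound on the largest zero of an expected mixed characteristic polynomial, and then invoke Theorem~\ref{mainbound2}. Since each $\XR_i$ has finite support $V_i$, showing that an event has positive probability amounts to exhibiting a single realization $(\vv_1,\ldots,\vv_m)\in V_1\times\cdots\times V_m$ that witnesses the bound, with each $\vv_i$ occurring with positive probability in the distribution of $\XR_i$. This is precisely the conclusion one can extract from a compatibility argument.

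First I would set $\ww=\one$ in Theorem~\ref{mixedchar} and look at the family $\{h[\vv_1,\ldots,\vv_m](t\ee+\one)\}_{\VV\in V_1\times\cdots\times V_m}$, which is compatible by that theorem. Applying Theorem~\ref{expfam} then produces a tuple $(\vv_1,\ldots,\vv_m)$, with each $\PP[\XR_i=\vv_i]>0$, such that the largest zero of $h[\vv_1,\ldots,\vv_m](t\ee+\one)$ is at most the largest zero of $\EE\, h[\XR_1,\ldots,\XR_m](t\ee+\one)$. By the multilinearity identity \eqref{mixedexp}, this expectation equals $h[\EE\XR_1,\ldots,\EE\XR_m](t\ee+\one)$, whose largest zero is by definition $\lma(\EE\XR_1,\ldots,\EE\XR_m)$.

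Next I would apply Theorem~\ref{mainbound2} to the deterministic tuple $\EE\XR_1,\ldots,\EE\XR_m\in\Lambda_+(\ee)$. The three hypotheses of Theorem~\ref{hypprob} line up exactly with what that theorem requires of its input vectors: the expectations sum to $\ee$, each has trace at most $\epsilon$, and each has rank at most $r$. This yields $\lma(\EE\XR_1,\ldots,\EE\XR_m)\le\delta(\epsilon,m,r)$, so $\lma(\vv_1,\ldots,\vv_m)\le\delta(\epsilon,m,r)$ for our chosen realization. To translate this into the desired bound on $\lma(\sum_i\XR_i)$, I would invoke Theorem~\ref{linearize}, which gives $\lma(\vv_1+\cdots+\vv_m)\le\lma(\vv_1,\ldots,\vv_m)$. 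On the positive-probability event $\{\XR_i=\vv_i\text{ for all }i\}$, this then yields $\lma(\sum_i\XR_i)\le\delta(\epsilon,m,r)$, completing the argument.

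The proof is fundamentally an assembly of machinery, so no step is genuinely hard, but the nonobvious point to flag is that the rank, trace, and sum hypotheses are imposed on the expectations $\EE\XR_i$ rather than on pointwise values of $\XR_i$; this is exactly what is needed to feed into Theorem~\ref{mainbound2} after passing through expectations via \eqref{mixedexp}. All the analytic work has been absorbed into the compatibility theorem (Theorem~\ref{mixedchar}), the interlacing extraction (Theorem~\ref{expfam}), the convexity-based comparison (Theorem~\ref{linearize}), and the deterministic bound (Theorem~\ref{mainbound2}), so the remaining task is just the clean chaining of these four ingredients.
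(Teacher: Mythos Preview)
Your proposal is correct and follows essentially the same route as the paper's proof: invoke Theorem~\ref{mixedchar} for compatibility, Theorem~\ref{expfam} to extract a good realization, \eqref{mixedexp} to identify the expected polynomial, Theorem~\ref{mainbound2} for the deterministic bound, and Theorem~\ref{linearize} to pass from $\lma(\vv_1,\ldots,\vv_m)$ to $\lma(\sum_i\vv_i)$. If anything, you have been slightly more explicit than the paper in spelling out the roles of \eqref{mixedexp} and Theorem~\ref{linearize}.
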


\begin{proof}[Proof of Theorem~\ref{t1}]
For $1\leq i \leq k$, let $\xx^i=(x_{i1},\ldots,x_{in})$ where $\yy=\{x_{ij} : 1\leq i \leq k, 1\leq j \leq k\}$ are independent variables. Consider the polynomial 
$$
g(\yy) = h(\xx^1)h(\xx^2) \cdots h(\xx^k) \in \RR[\yy],
$$
which is hyperbolic with respect to $\ee^1\oplus \cdots \oplus \ee^k$, where $\ee^i$ is a copy of $\ee$ in the variables $\xx^i$, for all $1 \leq i \leq k$. The hyperbolicity cone of $g$ is the direct sum $\Lambda_+:=\Lambda_+(\ee^1) \oplus \cdots \oplus \Lambda_+(\ee^k)$, where $\Lambda_+(\ee^i)$ is a copy of $\Lambda_+(\ee)$ in the variables $\xx^i$,   for all $1 \leq i \leq k$. 

Let $\XR_1, \ldots, \XR_m$ be independent random vectors in $\Lambda_+$  such that for all $1\leq i \leq k$ and  $1\leq j \leq m$:
$$
\PP\left[ \XR_j = k\uu_j^i\right] =  \frac 1 k,
$$
where $\uu_1^i, \ldots, \uu_m^i$ are copies in $\Lambda_+(\ee^i)$ of $\uu_1, \ldots, \uu_m$. Then 
\begin{align*}
\EE \XR_j &= \uu_j^1 \oplus \uu_j^2 \oplus \cdots \oplus \uu_j^k, \\
\tr(\EE \XR_j) &= k\tr(\uu_j) \leq k\epsilon,  \\
\rk(\EE \XR_j) &= k\rk(\uu_j) \leq kr, \mbox{ and } \\
\sum_{j=1}^m \EE \XR_j &= \ee^1\oplus \cdots \oplus \ee^k,
\end{align*}
for all $1\leq j \leq m$. By Theorem~\ref{hypprob} there is a partition 
$S_1\cup \cdots \cup S_k =[m]$ such that 
$$
\lma\left(\sum_{i \in S_1}k\uu_i^1+\cdots + \sum_{i \in S_k}k\uu_i^k  \right)\leq \delta(k\epsilon,m,kr). 
$$
However 
$$
 \lma\! \left(\sum_{i \in S_1}k\uu_i^1+\cdots + \sum_{i \in S_k}k\uu_i^k \right) = k \! \max_{1\leq j \leq k} \lma \! \left(\sum_{i \in S_j}\uu_i^j \right) = 
 k \! \max_{1\leq j \leq k} \lma \! \left(\sum_{i \in S_j}\uu_i \right), 
$$
and the theorem follows.
\end{proof}

\section{Specific bounds}\label{sspec}
Finally we prove the specific bounds in Theorem~\ref{specifics}. 
\begin{proof}[Proof of Theorem~\ref{specifics}]
The infimum for the case when $r=\infty$ i.e., \eqref{a0} and \eqref{a1}, is easily computed. 

For \eqref{a2}, we first compute the infimum $\alpha$ when $1\leq \delta \leq 2$ and $\mu>1-\delta/2$. For $r=2$, \eqref{dura} simplifies to 
$$
\mu \geq (\delta-1)^{-1}-(\delta-1).
$$
Since $1-\delta/2 \leq (\delta-1)^{-1}-(\delta-1)$,
$$
\alpha = \inf\{ \epsilon ( (\delta-1)^{-1}-(\delta-1)) + \delta : 1< \delta \leq 2\} 
$$
If $0 < \epsilon < 1$, the function $\delta \mapsto \epsilon ( (\delta-1)^{-1}-(\delta-1))+\delta$, $\delta > 1$, has a unique minimum at $\delta_0 = 1+\sqrt{\epsilon}/ \sqrt{1-\epsilon}$. Since $1<\delta_0 \leq 2$ if and only if $\epsilon \leq 1/2$, 
$$
\alpha =
\begin{cases}
1 + 2\sqrt{\epsilon}\sqrt{1-\epsilon}, &\mbox{ if } 0<\epsilon \leq 1/2 \\
2, &\mbox{ if } \epsilon >1/2. 
\end{cases}
$$
Let $\beta$ be the infimum when $\delta \geq 2$ and $\mu>1$. Then $\beta\geq  \epsilon +2$, and hence $\alpha < \beta$. 

For \eqref{a3}, note that for $r \geq 1$ and $x\geq 0$, 
$$
\frac { (x+1)^{r-1} -x^{r-1} } {(x+1)^r-x^r} \leq \frac 1 {1+x}.
$$
Hence to get an upper bound for $\delta(\epsilon,\infty,r)$ we may replace \eqref{dura} with 
$$
\delta-1 \geq \frac \delta \mu \cdot \frac { 1} { 1+ \frac {\delta} {r \mu}}
$$
or equivalently
$$
\mu \geq 1+ \frac 1 {\delta-1} -\frac \delta r.
$$
Thus the inequality $\mu \geq 1 -\frac \delta r$ is superfluous, so that 
$$
\delta(\epsilon,\infty,r) \leq \inf \left\{ \epsilon \mu + \delta : 1<\delta \leq 2, \mu \geq 1+ \frac 1 {\delta-1} -\frac \delta r\right\},
$$
which is (computed as above and) equal to
$$
\begin{cases}
1+2\sqrt{\epsilon}\sqrt{1-\epsilon/r}+ \frac {r-1} r \epsilon, &\mbox{ if } \epsilon \leq r/(r+1), \\ 
2+\epsilon(1-2/r), &\mbox{ if } \epsilon > r/(r+1).
\end{cases}
$$
\end{proof}


\begin{thebibliography}{99}
\bibitem{AB} N.~Amini, P.~Br\"and\'en, Non-representable hyperbolic matroids, Adv. Math. {\bf 334} (2018), 417--449.
\bibitem{ABG} M.~F.~Atiyah, R.~Bott, L~G\aa rding, Lacunas for hyperbolic differential operators with constant coefficients. I, Acta Math. {\bf 124} (1970), 109-189.


\bibitem{BGLS} H.~H.~Bauschke, O.~G\"uler, A.~S.~Lewis, H.~S.~Sendov,  Hyperbolic polynomials and convex analysis, Canad. J. Math. {\bf 53} (2001), 470-488.

\bibitem{BBL} J.~Borcea, P.~Br\"and\'en,  T.~M.~Liggett, {Negative dependence and the geometry of polynomials}, J. Amer.
Math. Soc. {\bf 22} (2009), 521-567. 

\bibitem{R2}
M.~Bownik, P.~G.~Casazza, A.~W.~Marcus, D.~Speegle, 
Improved bounds in Weaver and Feichtinger Conjectures,  J. Reine Angew. Math. (to appear).

\bibitem{BrObs} P.~Br\"and\'en, { Obstructions to determinantal representability,} Adv. Math., {\bf 226} (2011), 1202--1212. 
%
\bibitem{BrOp} P.~Br\"and\'en, Hyperbolicity cones of elementary symmetric polynomials are spectrahedral, Optim. Lett. {\bf 8} (2014), 1773-1782. 

\bibitem{BLec0} P.~Br\"and\'en, Lecture notes for Interlacing families, \url{https://people.kth.se/~pbranden/notes.pdf}

\bibitem{BLec} P.~Br\"and\'en, Hyperbolic polynomials and the Marcus-Spielman-Srivastava theorem, Lecture notes, \url{https://arxiv.org/abs/1412.0245}. 


%
%





\bibitem{Cas} P.~G.~Casazza, J.~C.~Tremain, Consequences of the Marcus/Spielman/Srivastava solution of the Kadison-Singer problem. New trends in applied harmonic analysis, 191--213, Appl. Numer. Harmon. Anal., BirkhŠuser/Springer, Cham, 2016.  

\bibitem{CS} M.~Chudnovsky, P.~Seymour, The roots of the independence polynomial of a clawfree graph, J. Combin. Theory Ser. B {\bf 97} (2007),  350-357.

\bibitem{Cohen} M.~Cohen, \url{https://open.library.ubc.ca/cIRcle/collections/48630/items/1.0340957}


\bibitem{Ga} 
L.~G\aa rding, {An inequality for hyperbolic polynomials}, 
J. Math. Mech. {\bf 8} (1959), 957-965. 

%
%
%
%

\bibitem{Horm} L.~H\"ormander, The analysis of linear partial differential operators. II. Differential operators with constant coefficients, Springer-Verlag, Berlin, 1983.


\bibitem{KS} R.~V.~Kadison, I.~M.~Singer, Extensions of pure states, Amer. J. Math. {\bf 81} (1959),  383-400.


%


\bibitem{MSS1} A.~W.~Marcus, D.~A.~Spielman, N.~Srivastava, Interlacing families I: Bipartite Ramanujan graphs of all degrees, Ann. of Math. (2) {\bf 182} (2015), no. 1, 327--350.

\bibitem{MSS2} A.~W.~Marcus, D.~A.~Spielman, N.~Srivastava, Interlacing families II: Mixed characteristic polynomials and the Kadison-Singer problem, Ann. of Math. (2) {\bf 182} (2015), no. 1, 307--325.  

\bibitem{Newton} I.~Newton, Arithmetica universalis: sive de compositione et resolutione arithmetica liber (1707).

\bibitem{Pem} R.~Pemantle, Hyperbolicity and stable polynomials in combinatorics and probability, Current developments in mathematics, 2011, 57--123, Int. Press, Somerville, MA, 2012. 

\bibitem{Ren} J.~Renegar,  Hyperbolic programs, and their derivative relaxations,  Found. Comput. Math., 
{\bf 6} (2006), 59--79.

\bibitem{Vin} V.~Vinnikov, 
LMI representations of convex semialgebraic sets and determinantal representations of algebraic hypersurfaces: past, present, and future,  Mathematical methods in systems, optimization, and control, 325--349, 
Oper. Theory Adv. Appl., {\bf 222}, BirkhŠuser/Springer Basel AG, Basel, 2012. 

\bibitem{Wag} D.~G.~Wagner, Multivariate stable polynomials: theory and applications, Bull. Amer. Math. Soc. {\bf 48} (2011), 53--84. 

\bibitem{We} N.~Weaver, 
The Kadison-Singer problem in discrepancy theory,
Discrete Math. {\bf 278} (2004),  227-239.

\end{thebibliography}
\end{document}